\documentclass[12pt,leqno]{amsart}
\usepackage{amscd}
\usepackage{color}
\usepackage[symbol]{footmisc}
\usepackage{amssymb}
\usepackage{amsfonts}
\usepackage{latexsym}
\usepackage{verbatim}
\usepackage{multirow}
\usepackage{bbm}
\usepackage[shortlabels]{enumitem}
\usepackage[backref=page]{hyperref}
\renewcommand*{\backref}[1]{}
\renewcommand*{\backrefalt}[4]{%
	\ifcase #1 (Not cited).%
	\or        (Cited on page~#2).%
	\else      (Cited on pages~#2).%
	\fi}
\hypersetup{colorlinks=true,linkcolor=blue,citecolor=blue,urlcolor=black}

\newcommand{\R}{\mathbb{R}}
\newcommand{\C}{\mathbb{C}}

\newcommand{\rmA}{\mathrm{A}}
\newcommand{\rmB}{\mathrm{B}}
\newcommand{\rmC}{\mathrm{C}}
\newcommand{\rmD}{\mathrm{D}}
\newcommand{\rmE}{\mathrm{E}}
\newcommand{\rmF}{\mathrm{F}}
\newcommand{\rmG}{\mathrm{G}}

\newcommand{\ad}{\operatorname{ad}}

\renewcommand{\H}{\mathbb{H}}

\newcommand{\g}{\mathfrak{g}}

\newcommand{\tr}{\operatorname{tr}}

\renewcommand{\epsilon}{\varepsilon}
\renewcommand{\phi}{\varphi}
\renewcommand{\Re}{\mathrm{Re}}

\theoremstyle{plain}
\newtheorem{thm}{Theorem}[section]
\newtheorem{prop}[thm]{Proposition}
\newtheorem{lem}[thm]{Lemma}
\newtheorem{cor}[thm]{Corollary}

\theoremstyle{definition}
\newtheorem{defn}[thm]{Definition}

\newtheorem{ex}[thm]{Example}
\title{Semi-integrable almost hyperhermitian structures}

\begin{document}
 
\thanks{}
\subjclass[2020]{32Q60, 53C15, 53C26}

\address{(G. Gentili)  Université Paris-Saclay, CNRS, Laboratoire de mathématiques d’Orsay, 91405, Orsay, France.}
\email{giovanni.gentili@universite-paris-saclay.fr}

\address{(A. Moroianu) Université Paris-Saclay, CNRS, Laboratoire de mathématiques d’Orsay, 91405, Orsay, France, and Institute of Mathematics “Simion Stoilow” of the Romanian Academy, 21 Calea Grivitei, 010702 Bucharest, Romania.}
\email{andrei.moroianu@math.cnrs.fr}

\author{Giovanni Gentili and Andrei Moroianu}

\date{\today}

\begin{abstract} 
In this work, we introduce a family of almost hyperhermitian structures that we call semi-integrable. We subdivide them into four disjoint classes and show that each class is non-empty. Finally, we construct semi-integrable almost hyperhermitian structures on all reductive Lie algebras of compact type and dimension $4n$, with $n\geq 2$.
\end{abstract}

\maketitle

\section{Introduction}

An \emph{almost hypercomplex structure} on a smooth manifold of real dimension $4n$ is a reduction of the structure group of the tangent bundle to $\mathrm{GL}(n,\H)$, where $\H$ denote the quaternions. It can be equivalently interpreted as a $2$-dimensional sphere $\sf H$ of almost complex structures. The structure group reduction is often described by means of a triple $(I,J,K)$ of almost complex structures satisfying the usual identities of quaternionic units: $IJK=-\mathrm{Id}$. Such a point of view may, however, be restrictive, as it suggests that the triple $(I,J,K)$ has a preferential role, whereas this is in general not the case. 

When all the almost complex structures of the sphere are integrable one simply drops the term \emph{almost} and refers to $\mathsf{H}$ as a \emph{hypercomplex structure}. Note that this does not entail the full integrability of $\mathsf{H}$ as a $\mathrm{GL}(n,\H)$-structure, which would mean that the manifold is locally isomorphic to $\H^n$. Indeed, as Obata showed in \cite{Obata}, integrability is encoded by the vanishing of the curvature of the unique torsion-free connection that preserves $\mathsf{H}$. Integrable $\mathrm{GL}(n,\H)$-structures were studied by Sommese \cite{Sommese}.

Boyer \cite{Boyer} proved that in real dimension $4$ there are only three classes of manifolds carrying hyperhermitian structures, up to conformal equivalence: tori, K3 surfaces and certain Hopf surfaces. Hyperhermitian $4$-manifolds were also considered by Gauduchon--Tod \cite{GT} in their study of non-trivial triholomorphic Killing vector fields.

Hypercomplex geometry has been investigated at length especially in relation to compatible hyperhermitian metrics. Let $({\sf H},g)$ be an almost hyperhermitian structure. Each pair $(L,g)$, where $L\in \mathsf{H}$, lies in one of the classes in the Gray--Hervella classification of almost Hermitian structures \cite{GH}. A refinement for almost hyperhermitian structures was later provided by Martìn-Cabrera--Swann \cite{MCS}, but they focused on the choice of a generating triple $(I,J,K)$ for $\sf{H}$, showing how the Gray--Hervella classes of $(I,g),(J,g)$ condition that of $(K,g)$. On the one hand we aim to study this problem by removing the dependence on a chosen generating triple and instead investigate how the Gray--Hervella classes of two arbitrary almost complex structures in $\sf{H}$ influences those of the remaining ones. On the other hand, for the purposes of the present paper, we shall only take into account two Gray-Hervella classes: $\mathcal{W}_3\oplus \mathcal{W}_4$, which corresponds to integrability of the almost complex structure, and $\mathcal{W}_1\oplus \mathcal{W}_2 \oplus \mathcal{W}_3 $, corresponding to semi-K\"ahler structures, i.e. those whose associated fundamental form is coclosed.

For what concerns integrability, we recall the result of Obata \cite{Obata} that if two orthogonal almost complex structures $I$, $J$ in an almost hypercomplex structure $\mathsf{H}$ are integrable, then their composition is integrable as well. Yano and Ako \cite{Yano-Ako} generalised Obata's result by showing that, in fact, every element of $\mathsf{H}$ is integrable. In Theorem \ref{Prop:integrability} we drop the assumption of orthogonality of $I$ and $J$ and prove that whenever two non-proportional almost complex structures in $\mathsf{H}$ are integrable, every element of $\mathsf{H}$ is integrable as well.

We next focus on the case when $\mathsf{H}$ contains an integrable complex structure and a semi-K\"ahler one (possibly coinciding). 
The main object of investigation of our paper is thus the following class of structures:

\begin{defn}
An almost hyperhermitian structure $({\sf H},g)$ is called {\em semi-integrable} if there exist $I,L\in \mathsf{H}$, such that $I$ is integrable and $(L,g)$ is semi-K\"ahler, i.e. $\delta \omega_{L}=0$, where $\omega_L$ is the associated fundamental $2$-form.
\end{defn}

First of all, in Theorem \ref{Thm:types} we show that semi-integrable almost hyperhermitian structures can be divided into four disjoint classes:

\begin{itemize}[itemsep=2mm]
    \item Type $1$: Every almost complex structure in ${\sf H}$ is integrable and semi-K\"ahler.
    \item Type $2$: Up to a sign there is a unique integrable complex structure in ${\sf H}$ and every almost complex structure in ${\sf H}$ is semi-K\"ahler.
    \item Type $3$: Up to a sign there is a unique integrable complex structure and the semi-K\"ahler almost complex structures in ${\sf H}$ are exactly the ones orthogonal to it.
    \item Type $4$: Up to a sign there are a unique integrable complex structure and a unique semi-K\"ahler almost complex structure in ${\sf H}$ (not necessarily distinct) and they are not orthogonal to each other.
\end{itemize}

We will provide examples of semi-integrable structures for each type, showing that these classes are not empty.
%Note, for instance that hyperk\"ahler manifolds belong to the family of semi-integrable almost hyperhermitian manifolds of type 1.

A class of hyperhermitian structures $(\mathsf{H},g)$ of independent interest consists of those equipped with a connection with skew-symmetric torsion that preserves $\mathsf{H}$ and $g$, often called the Bismut connection \cite{Bismut}. We recall that the condition of having skew-symmetric torsion can be interpreted in the context of the Gray--Hervella classification as the vanishing of the component along $\mathcal{W}_2$. These structures give rise to the so-called \emph{hyperk\"ahler manifolds with torsion} (HKT), introduced by Howe--Papadopoulos \cite{HP}. Because they represent a relaxation of hyperk\"ahler structures and because of their significance within the context of supersymmetric sigma models, they have attracted considerable attention in recent years (see, e.g., \cite{AV,BDV,BF,BFG,BFGV,FG,IP,OPS,MV,W} and references therein). Whenever the torsion $3$-form of the Bismut connection is closed, these structures are called \emph{strong} HKT and were studied in several papers (see, e.g., \cite{BF,BFG,BFGV,MV,W}). In particular, with the further assumption of having parallel Bismut torsion, building on the works \cite{BPT,BFG24}, Brienza--Fino--Grantcharov--Verbitsky \cite{BFGV} proved that any simply connected compact strong HKT manifold with parallel Bismut torsion splits as the product of a compact hyperk\"ahler manifold and a compact Lie group with a bi-invariant metric. Such a result can be regarded as a particular case of the more general description provided by Moroianu--Schwahn \cite{MS} of geometries of a Riemannian connection with parallel, skew-symmetric and closed torsion.

More precisely from the analysis of Moroianu-Schwahn it follows that an almost hyperhermitian structure preserved by a metric connection with parallel, skew-symmetric and closed torsion on a complete simply connected almost hyperhermitian manifold $(M,\mathsf{H},g)$ is the product of a complete hyperk\"ahler manifold and a reductive Lie group $(G,\mathsf{H}_G,g_G)$ of compact type endowed with a bi-invariant metric. Furthermore, the torsion form is non-trivial only on the group part, where it coincides with a multiple of the canonical $3$-form. Since this splitting is preserved by each almost complex structure belonging to $\mathsf{H}$, it follows that the hyperk\"ahler factor plays no role in the semi-integrability of $(\mathsf{H},g)$, namely, $(\mathsf{H},g)$ is semi-integrable if and only if $(\mathsf{H}_G,g_G)$ is semi-integrable. In addition, $\mathsf{H}_G$ is completely determined by its restriction on the Lie algebra $\g$ of $G$, by left-translation.

Motivated by this discussion, we will carry out our investigation on reductive Lie algebras of compact type equipped with an $\ad_\g$-invariant metric. First, we show that any semi-integrable almost hyperhermitian structure on such Lie algebras is of type 3, unless the Lie algebra is abelian (Proposition \ref{Prop:type3}). Conversely, we prove in Theorem \ref{Thm:reductive} that a reductive Lie algebra $\g$ of compact type and dimension $4n$, equipped with an $\ad_\g$-invariant metric $g$ admits a semi-integrable almost hyperhermitian structure $({\sf H},g)$ if and only if $\g \neq \mathfrak{su}(2)\times \R $.

Let us also mention that left-invariant hypercomplex structures on compact Lie groups were also constructed by Spindel--Sevrin--Troost--Van Proeyen \cite{SSTvP}, and their construction was later formalised and extended to certain homogeneous spaces by Joyce \cite{Joyce}. However, these hypercomplex structures are not semi-integrable with respect to any metric, because otherwise such metric would be balanced, which is excluded by the recent work of Fino--Grantcharov--Mainenti \cite[Theorem 4.6]{FGM}.

{\bf Acknowledgments.} 
This work was partly supported by the ANR--FAPESP project ANR-21-CE40-0017 {\it Bridges}, and by the
PNRR-III-C9-2023-I8 grant CF 149/31.07.2023 {\it Conformal Aspects of Geometry and Dynamics}.

\section{Preliminaries}

\subsection{Riemannian geometry} Let $(M,g)$ be a Riemannian manifold. 
Throughout the manuscript we will tacitly raise and lower indices of tensors by means of the metric $g$. In particular we shall often identify a $2$-form $\eta \in \Lambda^{2}M$ with a skew-symmetric endomorphism, still denoted $\eta$, as follows:
\[
g(\eta(X),Y)=\eta(X,Y)\,, \qquad X,Y\in TM\,.
\]
As a special case, when $J$ is an almost Hermitian structure on $(M,g)$, we are identifying the fundamental $2$-form
\[
\omega_J(X,Y):=g(JX,Y)\,, \qquad X,Y\in TM\,,
\]
with the almost complex structure $J$ itself. In the same way, a $3$-form $\tau \in \Lambda^3M$ may be identified with a skew-symmetric tensor of type $(2,1)$ via
\[
\tau(X,Y,Z)=g(\tau_XY,Z)\,, \qquad X,Y,Z\in TM\,.
\]
We will denote with $\delta$ the codifferential, i.e. the formal adjoint of the exterior differential $d$ with respect to $g$. Given a local orthonormal frame $(e_i)$ on $M$, we recall the well-known formula
\[
\delta=-\sum_i e_i \lrcorner  \nabla_{e_i}\,,
\]
where $\nabla$ is the Levi-Civita connection of $g$.

\subsection{Almost hypercomplex structures} Let $M$ be a smooth manifold of dimension $4n$. An almost hypercomplex structure on $M$ is a reduction of the structure group of $M$ to $\mathrm{GL}(n,\mathbb{H})\subset \mathrm{GL}(4n,\mathbb{R})$. Such a structure is defined by a triple $(I,J,K)$ of almost complex structures satisfying 
\begin{equation}\label{quat}
    IJ=-JI=K\,.
\end{equation}
This triple is not unique, but the
$2$-dimensional sphere of almost complex structures on $M$,
\[
\mathsf{H}:=\{ aI+bJ+cK \mid (a,b,c)\in S^2 \}\,
\]
is uniquely determined by the almost hypercomplex structure. A set $\{I,J,K\}\subset \mathsf{H}$ satisfying \eqref{quat} will be called a {\em basis} of $\mathsf{H}$. Note that $\mathrm{SO}(3)$ acts simply transitively on the set of bases of $\mathsf{H}$.

We will henceforth use the notation $(M,\mathsf{H})$ to denote an almost hypercomplex manifold.

We say that two elements $I_1,I_2\in\sf H$ are orthogonal if $\tr(I_1I_2)=0$. For every identification of $\sf H$ with $S^2$ by means of a basis, the orthogonal of the almost complex structure $aI+bJ+cK$ corresponding to a point $(a,b,c)\in S^2$ is the great circle in the sphere $S^2$ orthogonal to the vector $(a,b,c)\in \mathbb{R}^3$.

In Section 3 we will study almost hypercomplex structures $\sf H$ containing at least two (integrable) complex structures $P,Q\in\sf H$ with $P\neq\pm Q$. We will show that in this case every element in $\sf H$ is integrable, thus extending results by Obata \cite{Obata} and by Yano and Ako \cite{Yano-Ako}.

\subsection{Almost hyperhermitian structures} Recall the following:

\begin{defn}
    An almost {\em hyperhermitian} structure on a manifold $M$ is an almost hypercomplex structure $\mathsf{H}$ together with a Riemannian metric $g$ which is almost Hermitian with respect to every almost complex structure $L\in\mathsf{H}$.
\end{defn}

%\begin{rmk}
  %  Every two elements $I_1,I_2\in \mathsf{H}$ with $I_1\neq\pm I_2$ determine $\mathsf{H}$ completely. Indeed, modulo the action of an element of $\mathrm{SO}(3)$ one can assume that $I_1=I$ and $I_2=aI+bJ$ for some constants $a,b$ with $a^2+b^2=1$ and $b>0$. Then $I_1I_2I_1+I_2=2bJ$, so one can recover $J$ by the formula $J=\lambda(I_1I_2I_1+I_2)$ with $\lambda=\sqrt{\frac1{4n}\mathrm{tr}((I_1I_2I_1+I_2)^2)}$.
    %We will therefore sometimes use the notation $(I_1,I_2)$ to denote an almost hypercomplex structure.
%\end{rmk}

If $(\mathsf{H},g)$ is an almost hyperhermitian structure, each almost complex structure $L\in \mathsf{H}$ has some Gray-Hervella type with respect to $g$. In particular, $L$ is integrable if $(L,g)$ is of type $\mathcal{W}_3\oplus\mathcal{W}_4$ (this condition actually does not depend on $g$), and $(L,g)$ is semi-Kähler if it is of type $\mathcal{W}_1\oplus\mathcal{W}_2\oplus\mathcal{W}_3$, or, equivalently, if $\delta \omega_{L}=0$.

We now make the following:

\begin{defn}
An almost hyperhermitian structure $({\sf H},g)$ is called {\em semi-integrable} if there exist $I,L\in \mathsf{H}$, such that $I$ is integrable and $(L,g)$ is semi-K\"ahler, i.e. $\delta \omega_{L}=0$.
\end{defn}

In the last sections of the paper we will focus on semi-integrable almost hyperhermitian structures. We will show that they can be divided into 4 disjoint classes, and construct examples of such structures in each class.

\section{The integrability of almost hypercomplex structures}

Let $\mathsf{H}:=\{ aI+bJ+cK \mid (a,b,c)\in S^2 \} $ be an almost hypercomplex structure on $M$. It was proved by Obata \cite{Obata} that whenever two mutually orthogonal elements, e.g. $I,J\in\sf H$ are integrable, then so is their composition $K:=IJ$. More generally, Yano and Ako \cite{Yano-Ako} showed that the integrability of $I$ and $J$ forces any almost complex structure in $\mathsf{H}$ to be integrable. 

However, from the result of Yano and Ako, it does not follow that the integrability of two (not necessarily orthogonal) almost complex structures in $\mathsf{H}$ implies the integrability of all the others. Since we are not aware of a reference proving such a fact, we present a proof below.

\begin{thm}\label{Prop:integrability}
Let $\sf H$ be an almost hypercomplex structure. If two almost complex structures $P\neq \pm Q$ in $\mathsf{H}$ are integrable, then all elements of $\sf H$ are integrable.
\end{thm}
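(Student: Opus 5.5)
The plan is to reduce to the orthogonal case and then quote Yano--Ako. Choose a basis $\{I,J,K\}$ of $\mathsf{H}$ with $P=I$ and $Q=aI+bJ$, where $a^2+b^2=1$ and $b\neq 0$; this is possible because $P\neq\pm Q$. It then suffices to show that $J$ is integrable. Indeed, $I$ and $J$ would be orthogonal integrable elements of $\mathsf{H}$, so Yano and Ako \cite{Yano-Ako} give integrability of every element of $\mathsf{H}$.

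To prove that $J$ is integrable I will use the Nijenhuis tensor
\[
N_L(X,Y)=[LX,LY]-L[LX,Y]-L[X,LY]-[X,Y].
\]
It is quadratic in $L$, so it is natural to introduce the symmetric bilinear expression
\[
N(A,B)(X,Y)=\tfrac12\bigl([AX,BY]+[BX,AY]-A[BX,Y]-B[AX,Y]-A[X,BY]-B[X,AY]\bigr)-[X,Y],
\]
where the term $-[X,Y]$ is interpreted with the weight it carries when we expand with coefficients summing suitably. Expanding $N_Q=0$ with $Q=aI+bJ$ and using $N_I=0$ gives, for $b\neq0$, a linear relation
\[
b\,N_J + 2a\,S(I,J)=0,
\]
where $S(I,J)(X,Y)=[IX,JY]+[JX,IY]-I[JX,Y]-J[IX,Y]-I[X,JY]-J[X,IY]$. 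The constant $-[X,Y]$ terms cancel because $a^2+b^2=1$. When $a=0$ we immediately get $N_J=0$. In general we need a second, independent relation between $N_J$ and $S(I,J)$.

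The main obstacle is producing that second relation. The first thing I would try is to exploit the fact that $Q'=-aI+bJ$ (equivalently $IQI^{-1}$ type conjugates) need not be integrable, so the second relation must instead come from the algebraic structure. The key step is to compute $S(I,J)(IX,Y)$, and similar substitutions, in terms of $N_J$ and $N_I$ using $IJ=-JI$. Another available route is the identity $Q=I\,(aI+bJ)\,I^{-1}$-twisting: $N_Q(IX,IY)$ versus $N_Q(X,Y)$. Combining $N_Q(X,Y)=0$ with $N_Q(IX,Y)=0$, and using $N_I=0$, I expect to obtain a relation of the form $S(I,J)(X,Y)=c\,I\,N_J(X,Y)$ or a similar expression involving $N_J(IX,Y)$. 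Substituting this back yields $(b\,\mathrm{Id}+2ac\,I)N_J=0$. The operator $b\,\mathrm{Id}+2ac\,I$ is invertible since $b\neq0$ (its eigenvalues are $b\pm 2aci$), so $N_J=0$.

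As a fallback for the hard step, I would work with $(1,0)$-forms instead. Integrability of $I$ means $d\Lambda^{1,0}_I\subset\Lambda^{2,0}_I\oplus\Lambda^{1,1}_I$. One checks that $Q$ acts on $\Lambda^{1,0}_I$ through $J$, which maps $\Lambda^{1,0}_I$ to $\Lambda^{0,1}_I$ antilinearly. Then one compares the $(0,2)_I$-components of $d$ applied to $Q$-holomorphic forms to deduce the analogous condition for $J$.
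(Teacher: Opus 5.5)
Your reduction is sound. Taking $P=I$, $Q=aI+bJ$ with $b\neq 0$, it suffices to prove that $J$ is integrable and then quote Yano--Ako; this is even a legitimate shortcut compared with the paper, which re-proves the integrability of $cI+dJ$ and of $K$ by hand. Your expansion $N_Q=a^2N_I+ab\,S(I,J)+b^2N_J$ is also correct, although it gives $bN_J+aS(I,J)=0$, without the factor $2$.

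\textbf{The gap.} The step you call ``the main obstacle'' is the entire content of the theorem, and the proposal does not carry it out. You only \emph{expect} a relation $S(I,J)=c\,IN_J$, or ``something involving $N_J(IX,Y)$''. This relation is not a consequence of $N_I=0$. Indeed $N_J(X,JX)=-JN_J(X,X)=0$ always. On the other hand, on $\R^4$ with $I=I_0$ and $J=\cos x_1\,J_0+\sin x_1\,K_0$ (so $N_I=0$), a direct computation gives $S(I,J)(e_1,Je_1)=-e_1\neq 0$ for $e_1=\partial_{x_1}$. So no constant $c$ works. If instead you derive the relation using $N_Q=0$, note that this hypothesis already gives $S(I,J)=-\frac ba N_J$ for $a\neq 0$. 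A relation $S=c\,IN_J$ is then equivalent to $N_J=0$ itself, so the argument is circular exactly where it matters. The $(1,0)$-form fallback is likewise only a sketch that stops before any deduction.

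\textbf{The missing idea.} What is needed is a type argument, not a proportionality. This is how the paper proceeds.
\begin{enumerate}
\item Fix a compatible metric and set $E_L(X)=L\nabla_XL-\nabla_{LX}L$. A direct computation gives $N_L(X,Y)=E_L(Y)X-E_L(X)Y$, so this carries the same information as $N_L$.
\item Your relation becomes $bE_J(X)+aM(X)=0$, where $M(X)=I\nabla_XJ+J\nabla_XI-\nabla_{IX}J-\nabla_{JX}I$.
\item $E_J(X)$ is skew-symmetric and anticommutes with $J$, for any compatible $J$.
\item Integrability of $I$, in the form $\nabla_{IX}I=I\nabla_XI$, forces $M(X)$ to anticommute with $I$.
\item Hence $E_J(X)$ anticommutes with both $I$ and $J$, and therefore commutes with $K$.
\item This alone is not enough, since $K$ itself anticommutes with $I$ and $J$. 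The paper then replaces $X$ by $IX$ and shows that $KE_J(X)\equiv \frac ab\, IE_J(IX)$ modulo skew-symmetric endomorphisms.
\item The right-hand side is skew by step 5 applied at $IX$. So $E_J(X)$ also anticommutes with $K$, and therefore vanishes.
\end{enumerate}
Your ``second relation'' has to be of this kind: a clash between the $J$-type of $E_J$ and the $I$-type of the mixed term, followed by the substitution $X\mapsto IX$.
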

\begin{proof} Let $g$ be a Riemannian metric compatible with $\sf H$, with Levi-Civita connection denoted $\nabla$. 
Recall (see e.g. \cite[Lemma 11.4]{M}) that an almost complex structure $L$ compatible with $g$ is integrable if and only if $E_L(X)=0$ for every $X\in TM$, where
\begin{equation}\label{el}
    E_L(X):=L\nabla_XL-\nabla_{LX}L\,.
\end{equation}

Let $(I,J,K)$ be a basis of $\sf H$ and $X\in TM$. We aim to show that $E_L(X)$ anti-commutes with $I,J,$ and $K$ for any $L\in \mathsf{H}$. It will then immediately follow that $E_L(X)=0$. Indeed, for any endomorphism $A$ that anti-commutes with $I,J,$ and $K$ we have
\[
AK=AIJ=-IAJ=IJA=KA=-AK\,,
\]
so $A=0$. 

We denote by $\mathrm{End}^-(TM)$ the set of $g$-skew-symmetric endomorphisms of $TM$. If $A,B$ are endomorphisms of $TM$, we say that $A \equiv B \mod{\mathrm{End}^-(TM)}$ if $A-B$ is skew-symmetric with respect to $g$.

Note that $I,J,K,$ and $E_L(X)$ all belong to $\mathrm{End}^-(TM)$, and two endomorphisms $A,B\in\mathrm{End}^-(TM)$ anti-commute if and only if $AB \equiv 0 \mod{\mathrm{End}^-(TM)}$.

Without loss of generality we may assume $P=I$ and $Q=aI+bJ$ for some $(a,b)\in S^1 \setminus \{(\pm 1,0)\}$. 

We shall first prove that $J$ is integrable. Let $X$ be any tangent vector. Since evidently we have $E_J(X)J+JE_J(X)=0$, we need to prove $IE_J(X)\equiv 0 \mod{\mathrm{End}^-(TM)}$ and $KE_J(X)\equiv 0 \mod{\mathrm{End}^-(TM)}$. 

Since they will be used multiple times, we prove here the following equivalences $\mod{\mathrm{End}^-(TM)}$. First, using that $E_I(X)=0$ we obtain
\[
I\nabla_{IX}J =I\nabla_{IX}(KI)=I\nabla_{IX}KI+IK\nabla_{IX}I=I\nabla_{IX}KI+ IKI\nabla_{X}I =I\nabla_{IX}KI+K\nabla_X I\,,
\]
and since $I\nabla_{IX}KI$ is skew-symmetric, we obtain
\begin{equation}\label{eqn:useful}
I\nabla_{IX}J \equiv K\nabla_X I \mod{\mathrm{End}^-(TM)}\,.
\end{equation}
If we replace $X$ with $JX$ in \eqref{eqn:useful} we also deduce
\begin{equation}\label{eqn:useful2}
I\nabla_{KX}J \equiv K\nabla_{JX}I \mod{\mathrm{End}^-(TM)}\,.
\end{equation}
From the integrability of $I$ and $Q$ we deduce the identity
\[
\begin{aligned}
0&=E_{Q}(X)\\
&=(aI+bJ)\nabla_X(aI+bJ)-\nabla_{(aI+bJ)X}(aI+bJ)\\
&=a^2E_I(X)+ab(I\nabla_XJ+J\nabla_XI-\nabla_{IX}J-\nabla_{JX}I)+b^2E_J(X)\\
&=ab(I\nabla_XJ+J\nabla_XI-\nabla_{IX}J-\nabla_{JX}I)+b^2E_J(X)\,.
\end{aligned}
\]
Thanks to \eqref{eqn:useful} we have
\[
\begin{aligned}
IE_J(X)&=-\frac{a}{b}(-\nabla_XJ+K\nabla_XI-I\nabla_{IX}J-I\nabla_{JX}I)\\
&\equiv \frac{a}{b}(I\nabla_{IX}J-K\nabla_XI) \mod{\mathrm{End}^-(TM)}\\
&\equiv 0 \mod{\mathrm{End}^-(TM)}\,,
\end{aligned}
\]
and similarly, using \eqref{eqn:useful2} we obtain
\[
\begin{aligned}
KE_J(X)&=-\frac{a}{b}(J\nabla_XJ-I\nabla_XI-K\nabla_{IX}J-K\nabla_{JX}I)\\
&\equiv \frac{a}{b}(K\nabla_{IX}J+K\nabla_{JX}I) \mod{\mathrm{End}^-(TM)}\\
&=\frac{a}{b}(K\nabla_{IX}J+I\nabla_{KX}J) \mod{\mathrm{End}^-(TM)}\\
&\equiv \frac{a}{b}IE_J(IX) \mod{\mathrm{End}^-(TM)}\\
&\equiv 0 \mod{\mathrm{End}^-(TM)}\,,
\end{aligned}
\]
where the last equality follows from the previous computation.
This shows that $E_J(X)=0$ and thus $J$ is integrable. 

With the same technique we now show that the complex structure $R=cI+dJ$ is integrable for all $(c,d)\in S^1$. Clearly
\[
E_R(X)=cd(I\nabla_XJ+J\nabla_XI-\nabla_{IX}J-\nabla_{JX}I)
\]
and thus, thanks to \eqref{eqn:useful} and \eqref{eqn:useful2}, we compute 
\[
\begin{aligned}
IE_R(X)&=cd(-\nabla_XJ+K\nabla_XI-I\nabla_{IX}J-I\nabla_{JX}I)\\
&\equiv cd(K\nabla_XI-I\nabla_{IX}J) \mod{\mathrm{End}^-(TM)}
\\
&\equiv 0 \mod{\mathrm{End}^-(TM)}\\
JE_R(X)&=cd(-K\nabla_{X}J-\nabla_{X}I-J\nabla_{IX}J-J\nabla_{JX}I)\\
&\equiv cd(-K\nabla_{X}J-J\nabla_{JX}I) \mod{\mathrm{End}^-(TM)}\\
&\equiv cd(KJ\nabla_{JX}J-\nabla_{JX}(JI)+(\nabla_{JX}J)I) \mod{\mathrm{End}^-(TM)}\\
&\equiv 0 \mod{\mathrm{End}^-(TM)}\\
KE_R(X)&=cd(J\nabla_{X}J-I\nabla_{X}I-K\nabla_{IX}J-K\nabla_{JX}I)\\
&\equiv cd(-K\nabla_{IX}J-K\nabla_{JX}I) \mod{\mathrm{End}^-(TM)}
\\
&\equiv cd(KJ\nabla_{JIX}J-I\nabla_{KX}J) \mod{\mathrm{End}^-(TM)}\\
&= 0 \mod{\mathrm{End}^-(TM)}\,.
\end{aligned}
\]
By symmetry, to conclude the proof, it is enough to show that $K$ is integrable. 
\[
\begin{aligned}
IE_K(X)&=-J\nabla_XK-I\nabla_{KX}K\\
&\equiv (\nabla_XJ)K+I\nabla_{KX}(JI) \mod{\mathrm{End}^-(TM)}
\\
&\equiv -J(\nabla_{JX}J)K+K\nabla_{KX}I   \mod{\mathrm{End}^-(TM)}
\\
&\equiv(\nabla_{JX}J)JK+J\nabla_{JX}I   \mod{\mathrm{End}^-(TM)}
\\
&\equiv (\nabla_{JX}J)I+\nabla_{JX}(JI)-(\nabla_{JX}J)I   \mod{\mathrm{End}^-(TM)}
\\
&\equiv0 \mod{\mathrm{End}^-(TM)}\\
JE_K(X)&=I\nabla_XK-J\nabla_{KX}K\\
&\equiv - (\nabla_XI)K-J\nabla_{KX}(IJ) \mod{\mathrm{End}^-(TM)}
\\
&\equiv I (\nabla_{IX}I)K+K\nabla_{KX}J \mod{\mathrm{End}^-(TM)}
\\
&=- (\nabla_{IX}I)IK-KJ\nabla_{JKX}J \mod{\mathrm{End}^-(TM)}
\\
&\equiv -I\nabla_{IX}J+I\nabla_{IX}J \mod{\mathrm{End}^-(TM)}\\
&= 0 \mod{\mathrm{End}^-(TM)}
\end{aligned}
\]
and since, obviously, $KE_K(X) \equiv 0 \mod{\mathrm{End}^-(TM)}$, we are done.
\end{proof}

\section{Semi-integrable almost hyperhermitian structures}

Recall that the Lee form of an almost Hermitian structure $(I,g)$ is defined by
$\theta_I:=I\delta I$, where as already mentioned, we identify the skew-symmetric endomorphism $I$ with the corresponding 2-form $\omega_I$.
We begin this section with the following observation regarding the Lee forms of an almost hyperhermitian structure. It can also be deduced from \cite{CS}, but we prefer to provide a direct proof here for convenience of the reader.

\begin{lem}\label{Lem:Leeforms}
Let $({\sf H},g)$ be an almost hyperhermitian structure such that $I\in {\sf H}$ is integrable. Then the Lee forms of $(L,g)$ coincide for every $L\in \mathsf{H}$ orthogonal to $I$.
\end{lem}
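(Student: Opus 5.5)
The plan is to reduce the statement to a single identity, $\delta K=I\delta J$, valid for any basis $(I,J,K)$ of $\mathsf{H}$ whenever $I$ is integrable. Throughout, a $1$-form $\alpha$ is identified with a vector, and $I\alpha$ denotes the $1$-form $Y\mapsto -\alpha(IY)$.

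\emph{Reduction.} Fix a basis $(I,J,K)$ of $\mathsf{H}$. Every $L\in\mathsf{H}$ orthogonal to $I$ has the form $L=cJ+sK$ with $c^2+s^2=1$. Expanding the definition of the Lee form gives
\[
\theta_L=(cJ+sK)(c\,\delta J+s\,\delta K)=c^2\theta_J+s^2\theta_K+cs\,(J\delta K+K\delta J)\,.
\]
It therefore suffices to prove two things: $\theta_J=\theta_K$, and $J\delta K=-K\delta J$. Both follow from the single identity
\begin{equation*}
\delta K = I\,\delta J .
\end{equation*}
First, given this identity, $\theta_K=K I\delta J=J\delta J=\theta_J$, since $KI=J$. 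Second, it gives $J\delta K=JI\delta J=-K\delta J$.

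\emph{Proof of the identity.} I will write $\delta K=-\sum_i e_i\lrcorner\nabla_{e_i}(IJ)$ in a local orthonormal frame and expand $\nabla(IJ)=(\nabla I)J+I\nabla J$. This gives two terms.

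The first term is $-\sum_i e_i\lrcorner\big((\nabla_{e_i}I)J\big)$. Up to sign, it is the $1$-form dual to $S:=\sum_i(\nabla_{e_i}I)Je_i$, which I will show vanishes. By the criterion \eqref{el}, integrability of $I$ gives $\nabla_{IX}I=I\nabla_XI$. Moreover, $\nabla_XI$ anticommutes with $I$, so $I(\nabla_XI)I=\nabla_XI$. The sum $S$ does not depend on the orthonormal frame, so I can replace $(e_i)$ by $(Ie_i)$:
\[
S=\sum_i(\nabla_{Ie_i}I)JIe_i=\sum_i I(\nabla_{e_i}I)JIe_i=-\sum_i I(\nabla_{e_i}I)IJe_i=-\sum_i(\nabla_{e_i}I)Je_i=-S\,.
\]
Hence $S=0$.

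For the second term, note that $I$ is $g$-skew, so
\[
(I\nabla_{e_i}J)(e_i,Y)=g(I(\nabla_{e_i}J)e_i,Y)=-(\nabla_{e_i}J)(e_i,IY)\,.
\]
Summing over $i$ gives $-\sum_i e_i\lrcorner(I\nabla_{e_i}J)=-\delta J(I\,\cdot)=I\delta J$, which is the identity.

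\emph{Main obstacle.} The only genuinely nontrivial step is showing that the $\nabla I$ term drops out. This is the one place where integrability of $I$ enters, through $E_I=0$ and the frame-change trick above. Everything else is bookkeeping of sign conventions between $2$-forms and skew endomorphisms, and I will check those conventions carefully so that the identity comes out as $\delta K=I\delta J$ rather than $\delta K=-I\delta J$.
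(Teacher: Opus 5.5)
Your proof is correct and follows the paper's argument essentially step for step: the paper also reduces everything to $\delta K=I\delta J$, kills the $(\nabla I)J$ term by replacing $(e_i)$ with $(Ie_i)$ and using $E_I=0$ together with $(\nabla I)I=-I\nabla I$, and then expands $\theta_{aJ+bK}$. The only cosmetic difference is that you state explicitly why the cross term $J\delta K+K\delta J$ vanishes, which the paper leaves implicit.
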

\begin{proof}
Let $e_i$ be a local orthonormal frame. Fix a $J\in {\sf H}$ orthogonal to $I$ and set $K:=IJ$. We have
\[
\begin{aligned}
\delta K&= -\sum_i e_i \lrcorner \nabla_{e_i} K=-\sum_i e_i \lrcorner (\nabla_{e_i} I)J-\sum_i e_i \lrcorner I(\nabla_{e_i} J)=-\sum_i e_i \lrcorner (\nabla_{e_i} I)J+I\delta J\,.
\end{aligned}
\]
We claim that the first summand in the right hand side term vanishes. Recall that $I$ is integrable if and only if the tensor $E_I$ defined in \eqref{el} vanishes. Furthermore $0=\nabla(I^2)=(\nabla I) I+ I\nabla I$ and thus
\[
\begin{aligned}
\sum_i e_i \lrcorner (\nabla_{e_i} I)J&=\sum_i Ie_i \lrcorner (\nabla_{Ie_i} I)J=-\sum_i Ie_i \lrcorner (\nabla_{e_i} I)IJ=-\sum_i e_i \lrcorner (\nabla_{e_i} I)IJI\\
&=-\sum_i e_i \lrcorner (\nabla_{e_i} I)J\,,
\end{aligned}
\]
where in the first equality we simply replaced the basis $(e_i)$ with $(Ie_i)$. Hence the claim is proved. We thus have $\delta K=I\delta J$, whence 
\begin{equation}\label{delta}
    K\delta K=KI\delta J=J\delta J\,.
\end{equation}
Now, if $L\in \mathsf{H}$ is orthogonal to $I$, it is of the form $L=aJ+bK$ with $a^2+b^2=1$. Then, using \eqref{delta} we deduce
\[
\theta_L=(aJ+bK)\delta (aJ+bK)=a^2J\delta J+ab(J\delta K+K\delta J)+b^2K \delta K=(a^2+b^2)\theta_J=\theta_J\,,
\]
concluding the proof.
\end{proof}

From Lemma \ref{Lem:Leeforms} we obtain immediately the following.

\begin{cor}\label{Cor:S1semi}
Let $({\sf H},g)$ be a semi-integrable structure and consider $I,L\in\sf H$ such that $I$ is integrable and $(L,g)$ is semi-k\"ahler. If $L$ is orthogonal to $I$, then $(L',g)$ is semi-K\"ahler for all $L'\in \mathsf{H}$ orthogonal to $I$.
\end{cor}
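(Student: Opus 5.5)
The corollary should follow directly from Lemma \ref{Lem:Leeforms}, via the observation that semi-Kählerness of $(L,g)$ is equivalent to the vanishing of its Lee form. First I would record this equivalence. By definition $\theta_L = L\delta L$, with the $1$-form $\delta\omega_L$ acted on by the endomorphism $L$. Since $L$ is invertible ($L^2=-\mathrm{Id}$), we get $\delta \omega_L = -L\theta_L$. Hence $\delta\omega_L=0$ if and only if $\theta_L=0$. The same holds for every $L'\in\mathsf{H}$, since each is an almost complex structure compatible with $g$.

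Next, fix $I$ integrable and $L$ semi-Kähler with $L$ orthogonal to $I$, so that $\theta_L=0$. Let $L'\in\mathsf{H}$ be any element orthogonal to $I$. Lemma \ref{Lem:Leeforms} applies because $I$ is integrable and both $L$ and $L'$ are orthogonal to $I$; it gives $\theta_{L'}=\theta_L=0$. By the equivalence above, $\delta\omega_{L'}=0$, i.e. $(L',g)$ is semi-Kähler.

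There is no real obstacle. The only point needing care is how $L$ acts on the $1$-form $\delta\omega_L$. Through the metric identification, $L$ acts on $1$-forms as the transpose, which is $-L$, and this is still invertible, so the conclusion is unaffected. The invertibility of $L$ is what makes "$\theta_L=0$" and "$\delta\omega_L=0$" interchangeable.
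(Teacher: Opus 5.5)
Your proposal is correct and is the argument the paper has in mind; the paper simply says the corollary follows "immediately" from Lemma \ref{Lem:Leeforms}. You correctly supply the one implicit step: $\delta\omega_{L'}=-L'\theta_{L'}$, so the equality $\theta_{L'}=\theta_L=0$ given by the lemma forces $\delta\omega_{L'}=0$.
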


We will now focus on the set of semi-integrable almost hyperhermitian structures $({\sf H},g)$, and show that it can be divided into 4 distinct classes, depending on how many almost complex structures in $\sf H$ are integrable, and how many are semi-Kähler with respect to $g$.

\begin{thm}\label{Thm:types}
Every semi-integrable almost hyperhermitian structure $({\sf H},g)$ belongs to only one of the following (disjoint) classes:
\begin{itemize}[itemsep=2mm]
    \item Type $1$: Every almost complex structure in ${\sf H}$ is integrable and semi-K\"ahler.
    \item Type $2$: Up to a sign there is a unique integrable complex structure in ${\sf H}$ and every almost complex structure in ${\sf H}$ is semi-K\"ahler.
    \item Type $3$: Up to a sign there is a unique integrable complex structure and the semi-K\"ahler almost complex structures in ${\sf H}$ are exactly the ones orthogonal to it.
    \item Type $4$: Up to a sign there are a unique integrable complex structure and a unique semi-K\"ahler almost complex structure in ${\sf H}$ (not necessarily distinct) and they are not orthogonal to each other.
\end{itemize}
\end{thm}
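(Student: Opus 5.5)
The plan is to split according to how many integrable structures $\sf H$ contains, and then to analyse which elements are semi-Kähler by controlling the Lee forms $\theta_L$ as $L$ varies in $\sf H$.

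\emph{Integrable part.} By Theorem \ref{Prop:integrability}, either every element of $\sf H$ is integrable, or the integrable elements are exactly $\pm I$ for a single $I$.

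\emph{Step 1: Lee forms as a quadratic form.} For $L=aI+bJ+cK$ with $(a,b,c)\in S^2$ we have
\[
\theta_L=L\delta L=\sum_{A,B\in\{I,J,K\}}\alpha_A\alpha_B\, A\delta B ,
\]
where $\alpha=(a,b,c)$. This is a quadratic form in $(a,b,c)$ with $1$-form coefficients. Since $L$ is an isomorphism, $\delta L=0$ if and only if $\theta_L=0$.

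\emph{Step 2: simplify using integrability of $I$.} The proof of Lemma \ref{Lem:Leeforms} gives $\delta K=I\delta J$ and, symmetrically, $\delta J=-I\delta K$. From these,
\[
K\delta K=J\delta J, \qquad J\delta K+K\delta J=JI\delta J+KI\delta J\cdot(-1)\ldots
\]
Computing directly, $J\delta K=JI\delta J=-K\delta J$, so the mixed $JK$ term vanishes. I expect the mixed $IJ$ and $IK$ terms $I\delta J+J\delta I$ and $I\delta K+K\delta I$ also to vanish. To show this, I would compute $\delta(IJ)=\delta K$ in a second way, expanding it with respect to $J$ instead of $I$, which yields a relation between $J\delta I$, $I\delta J$ and $\delta K$. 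Alternatively, I would show that $\sum_i e_i\lrcorner(\nabla_{e_i}J)I$ and the similar terms are controlled using $E_I=0$ and the symmetry $\nabla_{Ie_i}I=-I\nabla_{e_i}I$ from the proof of Lemma \ref{Lem:Leeforms}.

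This is the main obstacle. If the cross terms do not cancel, I would instead use only what is needed, namely the restriction of $\theta_L$ to great circles through $I$. Assuming the cleanest outcome, we get
\[
\theta_L=a^2\theta_I+(b^2+c^2)\theta_J .
\]

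\emph{Step 3: case analysis.} Let $L$ be semi-Kähler.
\begin{itemize}[itemsep=1mm]
\item If $\theta_I=\theta_J=0$, every element of $\sf H$ is semi-Kähler. This is type 1 or type 2, according to the integrability dichotomy above.
\item If $\theta_J=0\ne\theta_I$, the semi-Kähler elements are exactly the circle orthogonal to $I$ (Corollary \ref{Cor:S1semi}). Here $I$ must be the unique integrable structure up to sign: if everything were integrable, we could apply the same formula with $J$ playing the role of $I$, and $\theta_I=0$ would follow. This is type 3.
\item If $\theta_J\neq0$, then $\theta_L=0$ forces $a^2\theta_I=-(1-a^2)\theta_J$, with $a^2\neq0$. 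This determines $a^2$ uniquely if $\theta_I\ne0$. If $\theta_I=0$, then $a^2=1$ and $L=\pm I$, which is not orthogonal to $I$.
\end{itemize}

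\emph{The leftover subcase.} The subtle point is when $\theta_I=-\lambda\theta_J$ with $\lambda>0$. Then the whole circle $a^2=\tfrac1{1+\lambda}$ consists of semi-Kähler structures, contradicting the uniqueness required in type 4. To exclude it, I would use the full quadratic form, whose cross terms depend on $b,c$ separately, rather than the simplified version. Alternatively, I would show that $\theta_I$ and $\theta_J$ cannot be negatively proportional, using the $I$-anti-invariance of $\delta J$ (namely $\delta K=I\delta J$). Getting this case right is where I expect the real difficulty.

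\emph{Disjointness.} Disjointness is immediate from the definitions: the types are distinguished by the number of integrable structures and by the size of the semi-Kähler set (whole sphere, a great circle, or a single pair $\pm L$).
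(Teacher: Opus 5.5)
\textbf{There is a genuine gap.} Your Step~2 identity is false, and what it leaves open is exactly the uniqueness assertion of Type~4.

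The mixed terms do not cancel. By the proof of Lemma~\ref{Lem:Leeforms}, $I\delta J=\delta K$, but integrability of $I$ places no constraint on $\delta I$. So $I\delta J+J\delta I=\delta K+J\delta I$ has no reason to vanish. In Example~\ref{ex:4.5}, for instance, $\delta I=0$ because $I$ is K\"ahler, while $\delta K=J(\nabla f)\neq 0$; hence the $ab$-coefficient of $\theta_L$ is nonzero. Consequently $\theta_L=a^2\theta_I+(b^2+c^2)\theta_J$ does not hold, and your third bullet is unsupported. Your ``leftover subcase'', where a whole non-great circle would be semi-K\"ahler, is left unresolved, yet excluding it is exactly what Type~4 requires.

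There is also a smaller omission. In the third bullet you never rule out that every element of $\mathsf{H}$ is integrable, which Type~4 needs for uniqueness of the integrable structure. The paper handles the all-integrable case first: rotate the basis so that $L\perp I$, then apply Corollary~\ref{Cor:S1semi} twice to obtain Type~1.

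The missing idea is to work with $\delta L$ itself rather than the quadratic expression $\theta_L=L\delta L$.
\begin{itemize}
\item Since elements of $\mathsf{H}$ have constant coefficients, $\delta(aI+bJ+cK)=a\delta I+b\delta J+c\delta K$ is \emph{linear} in $(a,b,c)$. The semi-K\"ahler elements therefore form $S^2\cap V$ for a linear subspace $V\subset\R^3$, so a non-great circle can never occur. Your circle is an artifact of the wrong formula.
\item The identity $b\delta J+c\delta K=(b+cI)\delta J$, with $b+cI$ invertible when $(b,c)\neq 0$, shows that $V\cap I^\perp$ is either $0$ or all of $I^\perp$.
\end{itemize}
Concretely, as in the paper, suppose $L=aI+bJ$ with $a\neq 0$ is semi-K\"ahler. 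Then $\delta I=-\frac ba\delta J$.
\begin{itemize}
\item If $\delta J\equiv 0$, every element of $\mathsf{H}$ is semi-K\"ahler, giving Type~2.
\item Otherwise, any semi-K\"ahler $L'=a'I+b'J+c'K$ satisfies $(b'-a'\frac ba)\delta J-c'I\delta J=0$. Since $\delta J$ and $I\delta J$ are pointwise orthogonal of equal length, this forces $c'=0$ and $b'=a'\frac ba$, i.e.\ $L'=\pm L$, giving Type~4.
\end{itemize}
If no semi-K\"ahler element is non-orthogonal to $I$, Corollary~\ref{Cor:S1semi} gives Type~3.
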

\begin{proof}
Let $I,L\in {\sf H}$ be almost complex structures such that $I$ is integrable and $(L,g)$ is semi-K\"ahler. Let us also fix $J\in {\sf H}$ orthogonal to $I$ and denote by $K:=IJ$.

Suppose first that there is another integrable complex structure $I'\neq \pm I$ in ${\sf H}$. Then 
Theorem \ref{Prop:integrability} tells us that every complex structure in ${\sf H}$ is integrable. We need to show that in this case a semi-integrable structure is of type 1. Using the ${\rm SO}(3)$ action on ${\sf H}$, we can now assume without loss of generality that $L=J$. By Corollary \ref{Cor:S1semi}, we deduce that for every $a,b$ with $a^2+b^2=1$, the structure $(aJ+bK,g)$ is semi-K\"ahler. Another application of Corollary \ref{Cor:S1semi} with the roles of $I$ and $J$ replaced by $aJ+bK$ and $bJ-aK$ implies that for every $x,y$ with $x^2+y^2=1$, the almost Hermitian structure $(xI+y(bJ-aK),g)$ is semi-K\"ahler. Since every element of $\sf H$ can be written in this form, we obtain that $({\sf H},g)$ is of type 1.

If $({\sf H},g)$ is not of type 1, then $\pm I$ are the only integrable complex structures in ${\sf H}$. 
%Now, either the structure is of type 2, or there exists at least one $L_0\in {\sf H}$ such that $(L_0,g)$ is not semi-K\"ahler. 

Suppose first that $L$ is not orthogonal to $I$. Using the action of ${\rm SO}(3)$ on $\sf H$ we may suppose that $L=aI+bJ$ with $a^2+b^2=1$, and $a\neq 0$. Let $L'\in\sf H$ be any almost complex structure such that $(L',g)$ is semi-K\"ahler. We write $L'=a'I+b'J+c'K$ with $(a')^2+(b')^2+(c')^2=1$. Then we compute
\begin{equation}\label{eqn:dL}
0=\delta L=a \delta I+b\delta J\,,
\end{equation}
which implies $\delta I= -\frac{b}{a}\delta J$. 

If $\delta J =0$, we obtain $\delta I=0$ and $\delta K=0$, thanks to Lemma \ref{Lem:Leeforms}. By linearity we get $\delta L_0=0$ for every $L_0\in\sf H$, so $({\sf H},g)$ is of type 2. 

Assume from now on that $\delta J \neq 0$. From \eqref{delta} and \eqref{eqn:dL} we deduce
\[
0=\delta L'=a'\delta I+b'\delta J+c'\delta K=-a'\frac{b}{a} \delta J+b'\delta J-c'I\delta J\,.
\]
As $\delta J$ and $I\delta J$ are linearly independent and non-zero, we conclude that $c'=0$ and $b'=a'\frac{b}{a}$ which forces $L'$ to be proportional to $L$. Therefore in this case the semi-integrable structure is of type 4.

The only remaining case is when $L$ is orthogonal to $I$, and there exists no semi-K\"ahler structure in $\sf H$ which is not orthogonal to $I$. Then by Corollary \ref{Cor:S1semi}, for every almost complex structure $L'\in{\sf H}$ orthogonal to $I$, $(L',g)$ is semi-K\"ahler, so the structure is of type 3.
\end{proof}

We can summarize the different classes of semi-integrable almost hyperhermitian structures with the following table: 

\begin{table}[h]
\centering
\begin{tabular}{c|c|c}
 & Integrable & Semi-K\"ahler \\
\hline
Type 1 & ${\sf H} \simeq S^2$ & ${\sf H} \simeq S^2$ \\
Type 2 & $\{\pm I\}\simeq S^0$ & ${\sf H} \simeq S^2$ \\
Type 3 & $\{\pm I\} \simeq S^0 $ & $I^\perp \simeq S^1$ \\
Type 4 & $\{\pm I\}\simeq S^0$ & $\{\pm L\}\simeq S^0,\; L \not\perp I$ \\
\end{tabular}
\end{table}

In this table, each entry in the middle column represents the set of integrable elements in $\sf H$, and each entry in the right column represents the set of semi-Kähler elements in $\sf H$.

We will now construct examples of semi-integrable almost hyperhermitian structures. The first remark is that in dimension 4 there is no such structure of type 2. This fact follows from \cite[Proposition 2]{GT}. We provide a proof for the readers' convenience:

\begin{lem}
    If $({\sf H},g)$ is an almost hyperhermitian structure such that every $L\in\sf H$ is semi-Kähler and there exists $I\in\sf H$ which is integrable, then every $L\in\sf H$ is integrable.
\end{lem}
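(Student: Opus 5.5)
By Theorem \ref{Prop:integrability}, it suffices to produce one integrable $J\in\mathsf H$ with $J\neq\pm I$. I would choose $J$ orthogonal to $I$, set $K:=IJ$, and aim to show $E_J(X)=0$ for every $X$. Since all three conditions involved (integrability of $I$, $\delta L=0$ for all $L$, integrability of $J$) are linear conditions on the intrinsic torsion, I would phrase the argument representation-theoretically. The intrinsic torsion of the $\mathrm{Sp}(n)$-structure is the triple $(\nabla I,\nabla J,\nabla K)$. It takes values in $T^*M\otimes(\mathfrak{sp}(n)^\perp\subset\mathfrak{so}(4n))$, which in the $E,H$ formalism decomposes as
\[
(EH)\otimes\bigl(S^2H\oplus\Lambda^2_0E\,S^2H\bigr)\cong (K\oplus\Lambda^3_0E\oplus E)\otimes(H\oplus S^3H).
\]

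The plan has four steps, carried out in this order.

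\emph{First}, I would recall the known decomposition of the intrinsic torsion of an almost hyperhermitian structure (Martín-Cabrera--Swann). An element $L\in\mathsf H$ fails to be integrable exactly through the $(\cdot)\otimes S^3H$ components. More precisely, under the circle $\mathrm U(1)_L\subset\mathrm{Sp}(1)$ fixing $L$, $S^3H$ splits into weights $\pm1$ and $\pm3$, and $E_L$ is the projection onto the part that anticommutes with $L$ in the sense of \eqref{el}.

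\emph{Second}, I would express the codifferentials. By the computation in Lemma \ref{Lem:Leeforms}, the maps $X\mapsto\delta L$ only see the $E\otimes(\cdot)$ components. Concretely, $(\delta I,\delta J,\delta K)$ is an element of $E\otimes(H\oplus S^3H)\to EH\otimes S^2H$ contracted, so the hypothesis $\delta L=0$ for all $L$ kills the $EH$ and $ES^3H$ components.

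\emph{Third}, I would use the integrability of $I$, i.e.\ $E_I=0$, which kills the components of $K S^3H$ and $\Lambda^3_0E\,S^3H$ that are not fixed by the $I$-weight condition.

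\emph{Fourth}, the remaining $S^3H$-part must be shown to vanish. I would argue by $\mathrm{Sp}(1)$-equivariance: the vanishing of a full $\mathrm{Sp}(n)$-submodule is forced for all $L$ once it holds for one $L$ together with the trace conditions.

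If the representation theory turns out too slippery, I would fall back on a direct computation in the style of the proof of Theorem \ref{Prop:integrability}. I would show $IE_J(X)\equiv0$ and $KE_J(X)\equiv0 \mod{\mathrm{End}^-(TM)}$, using \eqref{eqn:useful}, \eqref{eqn:useful2}, and the identities $\delta K=I\delta J$ from \eqref{delta}, $\delta I=\delta J=\delta K=0$, contracted against an orthonormal frame. The conclusion would follow since $E_J(X)$ anticommutes with $J$ automatically.

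The main obstacle is the third and fourth steps, i.e.\ checking that the single-structure condition $E_I=0$ kills enough of the $S^3H$-components. A priori $E_I=0$ only removes the $I$-anti-invariant half of each $S^3H$ summand. What must be verified is that the remaining $I$-invariant half cannot survive without contributing to some $\delta L$. This means the codifferential conditions must be shown to couple to exactly that half, for instance via the $\mathrm{Sp}(1)$-rotation argument of Corollary \ref{Cor:S1semi}, which lets $I$ be replaced by other elements. I would first test this on the flat model $\H^n$ with a first-order perturbation of $J$ to see which components actually survive.
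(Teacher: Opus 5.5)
There is a genuine gap. Your plan never uses that $M$ is $4$-dimensional, and without that hypothesis the statement is false. In the paper this lemma justifies ``in dimension 4 there is no such structure of type 2'', and its proof opens with ``If $M$ has dimension 4''.

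The paper's own example on $(\R^8,g_0)$, with $f$ depending only on $x_1,\dots,x_4$, is a counterexample in dimension $8$. There $I$ is K\"ahler, $\delta\omega_L=0$ for every $L\in\mathsf{H}$, and $J$ is not integrable.

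Your steps three and four break exactly here. For $n\geq 2$, the triple $(\delta\omega_I,\delta\omega_J,\delta\omega_K)$ lies in $EH\otimes S^2H\cong EH\oplus ES^3H$. By Schur it is therefore blind to the components in $K\,S^3H$ and $\Lambda^3_0E\,S^3H$. Meanwhile $E_I=0$ only kills the $I$-weight $\pm3$ parts of those components. Their $I$-weight $\pm1$ parts are unconstrained, and in general they contribute to $E_J$.

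The $\mathrm{Sp}(1)$-rotation idea cannot rescue this. The condition $E_I=0$ is not $\mathrm{Sp}(1)$-invariant. Corollary~\ref{Cor:S1semi} only transports co-closedness, which you already assume for all of $\mathsf{H}$. It can also only be re-based at an element already known to be integrable, which is what you are trying to prove. The fallback frame computation fails for the same reason: trace identities such as $\delta K=I\delta J$ cannot control $E_J(X)$ pointwise. Your proposed first-order test on $\H^n$ would in fact have revealed the surviving components for $n\geq 2$.

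There is also a smaller error. Your displayed decomposition omits the summand $EH\otimes S^2H$ coming from $\mathfrak{sp}(1)\subset\mathfrak{sp}(n)^\perp$. So $E(H\oplus S^3H)$ actually occurs twice, and the codifferentials constrain only one copy.

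Your framework does give a proof once you set $n=1$. Then $\Lambda^2_0E=0$, so $K=\Lambda^3_0E=0$, and the whole intrinsic torsion is $EH\otimes S^2H$, precisely the summand you omitted.

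Concretely, $\Lambda^2_+$ is $\nabla$-invariant, so you can write $\nabla\omega_I=\gamma_3\otimes\omega_J-\gamma_2\otimes\omega_K$, and cyclically for $\omega_J$ and $\omega_K$. The conditions $\delta\omega_I=\delta\omega_J=\delta\omega_K=0$ then give $\gamma_3=-I\gamma_2=J\gamma_1$ and $\gamma_2=-K\gamma_1$. Hence $J\gamma_1=\gamma_3=IK\gamma_1=-J\gamma_1$, so $\gamma_1=\gamma_2=\gamma_3=0$.

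Thus in dimension $4$, the vanishing of all codifferentials already forces the structure to be hyperk\"ahler, without using that $I$ is integrable. This is essentially Hitchin's lemma, since $\delta\omega_L=-{*}d\omega_L$ for self-dual $\omega_L$.

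The paper argues differently, using the integrability of $I$. In dimension 4, integrable plus semi-K\"ahler means K\"ahler, so $\nabla I=0$ and hence $\nabla\omega_J=\alpha\otimes\omega_K$ for some $1$-form $\alpha$. Then $0=\delta\omega_J=-K(\alpha)$ forces $\alpha=0$. This route needs only one co-closed $J\perp I$.
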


\begin{proof}
    If $M$ has dimension 4, any semi-Kähler Hermitian structure is Kähler. Thus $I$ is parallel with respect to the Levi-Civita connection $\nabla$ of $g$. Let $I,J,K$ denote a basis of $\sf H$. The corresponding 2-forms $\omega_I,\ \omega_J$ and $\omega_K$ form an orthogonal basis of the space $\Lambda^2_+(M)$ of self-dual forms, which is preserved by $\nabla$. Moreover they have constant length $\sqrt 2$ and $\nabla I=0$. This shows that there exists a 1-form $\alpha$ such that $\nabla_X\omega_J=\alpha(X)\omega_K$ for every $X\in TM$.  Since $\omega_J$ is coclosed by assumption, we get $0=\delta\omega_J=-K(\alpha)$, so $\alpha=0$, whence $J$ is $\nabla$-parallel. This shows that $({\sf H},g)$ is hyperkähler, thus proving our claim.
\end{proof}

We will now construct examples of semi-integrable almost hyperhermitian structures on $\R^4$ of the remaining types. Since every hyperkähler structure (e.g. the flat one on $\mathbb{R}^4\simeq\mathbb{H}$) is in particular semi-integrable of type 1, we just need to construct examples of type 3 and 4. 

\begin{ex}\label{ex:4.5}
    Consider the flat hyperkähler structure $(I_0,J_0,K_0,g_0)$ on $\mathbb{R}^4$, with corresponding fundamental 2-forms 
\[\omega_{I_0}=dx_1\wedge dx_2+dx_3\wedge dx_4,\qquad 
\omega_{J_0}=dx_1\wedge dx_3-dx_2\wedge dx_4,\qquad\omega_{K_0}=dx_1\wedge dx_4+dx_2\wedge dx_3\,.
\]
For every non-constant function $f\in C^\infty(\mathbb{R}^4)$, we define 
\begin{equation}\label{exf}
    I:=I_0,\qquad J:=\cos fJ_0+\sin f K_0,\qquad K:=-\sin f J_0+\cos f K_0\,.
\end{equation}
We claim that $(I,J,K,g_0)$ is a semi-integrable almost hyperhermitian structure on $\R^4$ of type 4. Since $I$ is Kähler, it suffices, by Theorem \ref{Thm:types}, to prove that $J$ is non-integrable and not semi-Kähler.
Indeed, 
\[
\delta\omega_J=\sin f J_0(\nabla f)-\cos f K_0(\nabla f)=-K(\nabla f)
\]
is not identically 0 since $f$ is non-constant. Moreover,
for every $X\in TM$ we have $\nabla_XJ=X(f)K$, so
the tensor $E_J(X)$ defined in \eqref{el} is equal to 
\[E_J(X)=J\nabla_XJ-\nabla_{JX}J=X(f)I-(JX)(f)K\,.\]
Again, since $f$ is non-constant, $E_J(X)$ cannot vanish for every $X$, so $J$ is not integrable by \cite[Lemma 11.4]{M}.
\end{ex}

\begin{ex}
Consider again the hypercomplex structure $(I,J,K)$ on $\R^4$ defined by a function $f$ as in \eqref{exf}. We claim that for $f:=x_1$ and $g=e^{x_2}g_0$ (where $g_0$ is the standard Euclidean metric), the almost hypercomplex structure $(I,J,K,g)$ is semi-integrable of type 3. Notice first that $I$ is integrable, but $(I,g)$ is not semi-Kähler since $d\omega_I=dx_2\wedge\omega_{I}\neq 0$. Moreover, for every almost Hermitian structure $(L,g_0)$, denoting by $\omega_L=g(L\cdot,\cdot)$ and $\omega_L^0=g_0(L\cdot,\cdot)$ the fundamental forms, we have that $\omega_{J_0}^0$ and $\omega_{K_0}^0$ are closed, so 
\begin{eqnarray*}
    d\omega_J^0&=&-\sin x_1 dx_1\wedge \omega_{J_0}^0+\cos x_1 dx_1\wedge \omega_{K_0}^0\\
    &=&\sin x_1dx_1\wedge dx_2\wedge dx_4+\cos x_1dx_1\wedge dx_2\wedge dx_3\,,
\end{eqnarray*}
and thus
\begin{eqnarray*}d\omega_J&=&d(e^{x_2}\omega_J^0)=e^{x_2}(d\omega_J^0+dx_2\wedge\omega_J^0)=e^{x_2}(d\omega_J^0+\cos x_1dx_2\wedge\omega_{J_0}^0+\sin x_1 dx_2\wedge\omega_{K_0}^0)\\
&=&e^{x_2}(d\omega_J^0+\cos x_1dx_2\wedge dx_1\wedge dx_3+\sin x_1 dx_2\wedge dx_1\wedge dx_4)=0\,.
\end{eqnarray*}
This shows that $(J,g)$ is non-integrable and semi-Kähler, $(I,g)$ is integrable and not semi-Kähler, whence $(I,J,K,g)$ is semi-integrable of type 3.
\end{ex}

We will now consider the higher dimensional cases $n\geq 8$. Since the product of a semi-integrable almost hyperhermitian structure with a hyperkähler 4-manifold is again semi-integrable of the same type, in order to show that the 4 classes in Theorem \ref{Thm:types} are non-empty for all $n\ge 8$, we just need to construct a semi-integrable almost hyperhermitian structure of type 2 in dimension $8$. 

\begin{ex}
    Let $(M,g)=(\R^8,g_0)$ be the flat Euclidean space. Take any function $f$ depending only on the first 4 coordinates, and define the almost hyperhermitian structure $(I,J,K)$ on $(M,g)$ whose fundamental 2-forms are 
    \begin{eqnarray*}
        \omega_I&=&dx_1\wedge dx_2+dx_3\wedge dx_4+dx_5\wedge dx_6+dx_7\wedge dx_8,\\
        \omega_J&=&dx_1\wedge dx_3-dx_2\wedge dx_4+\cos f(dx_5\wedge dx_7-dx_6\wedge dx_8)\\
        &&+\sin f(dx_5\wedge dx_8+dx_6\wedge dx_7),\\
       \omega_K&=&dx_1\wedge dx_4+dx_2\wedge dx_3-\sin f(dx_5\wedge dx_7-dx_6\wedge dx_8)\\
        &&+\cos f(dx_5\wedge dx_8+dx_6\wedge dx_7)\,.
    \end{eqnarray*}
   Clearly $I$ is integrable (since $(I,g)$ is Kähler), and by the computations in Example \ref{ex:4.5} it is easy to check that $J$ is not integrable. Moreover, $(I,g)$ is semi-Kähler (again since it is Kähler), and 
   \begin{eqnarray*}
   \delta\omega_J&=&-\sum_{i=1}^8\frac{\partial}{\partial x_i}\lrcorner \nabla_{\frac{\partial}{\partial x_i}}\omega_J\\
   &=&-\sum_{i=1}^8\frac{\partial}{\partial x_i}\lrcorner\left(\frac{\partial \cos f}{\partial x_i}(dx_5\wedge dx_7-dx_6\wedge dx_8)+\frac{\partial \sin f}{\partial x_i}(dx_5\wedge dx_8+dx_6\wedge dx_7)\right)\\
   &=&0\,,
    \end{eqnarray*}
by the choice of $f$.
\end{ex}

\section{Semi-integrable structures on reductive Lie algebras}

Throughout this section, by reductive Lie group \emph{of compact type} we mean a Lie group whose universal cover is a direct product of a compact Lie group and a Euclidean factor $\R^k$. Also, a reductive Lie algebra \emph{of compact type} is simply the Lie algebra of a reductive Lie group of compact type.

In \cite[Thm. 5.1]{MS} the following result was obtained:
\begin{thm}\label{Thm:MS}
Let $(M,g)$ be a complete simply connected Riemannian manifold of dimension $n\geq 8$, equipped with a  connection $\nabla^\tau$ with closed and parallel skew-symmetric torsion $\tau$. If $\nabla^\tau$ preserves a hypercomplex structure $\sf H$ compatible with $g$, then $(M,g)$ splits isometrically as a product of a complete hyperk\"ahler manifold $(N,g_N)$ and a reductive Lie group $G$ of compact type, equipped with a bi-invariant metric $g_G$. Every almost complex structure in $\mathsf{H}$ preserves the two factors of this decomposition. Moreover, the torsion form $\tau$ vanishes on $N$ and its restriction to $G$ is equal to $\pm\frac12\sigma\in\Omega^3G$, where $\sigma$ denotes the canonical $3$-form of $G$, defined by $\sigma(X,Y,Z)=g_G([X,Y],Z)$.
\end{thm}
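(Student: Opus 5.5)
Since this statement is quoted from \cite[Thm.~5.1]{MS}, the plan below reconstructs the argument along the lines of the general theory of connections with parallel skew-symmetric torsion. It does not follow the source verbatim. The strategy is to split off the kernel of $\tau$ by a de Rham argument, and then to identify the non-degenerate part of $\tau$ with the structure constants of a Lie algebra.

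\emph{Step 1: splitting.} Write $\nabla^\tau=\nabla+\frac12\tau$, where $\nabla$ is the Levi-Civita connection, and set
\[
T_0:=\{X\in TM \mid X\lrcorner\tau=0\},\qquad T_1:=T_0^\perp .
\]
Skew-symmetry of $\tau$ shows that $T_1$ is spanned by the vectors $\tau_XY$. Since $\nabla^\tau\tau=0$, both distributions are $\nabla^\tau$-parallel. Moreover $\tau_X$ kills $T_0$ and preserves $T_1$, so $\nabla=\nabla^\tau-\frac12\tau$ preserves $T_0$ and $T_1$ as well. By the de Rham theorem (using completeness and simple connectivity), $M=N\times M_1$ isometrically.

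On $N$ we have $\nabla=\nabla^\tau$. Every $L\in\sf H$ is $\nabla^\tau$-parallel, hence commutes with the holonomy, which preserves the splitting. I will check that the orthogonal projections onto $T_0$ and $T_1$ then commute with each $L$, after possibly refining by the $\sf H$-invariant parts. Granting this, $N$ is hyperkähler and every $L\in\sf H$ preserves both factors.

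\emph{Step 2: Lie algebra structure on $T_1$.} For a parallel $3$-form one has $d\tau=\sum_i e_i\wedge\nabla_{e_i}\tau=2\sigma_\tau$, where
\[
\sigma_\tau(X,Y,Z,W)=\mathfrak{S}_{X,Y,Z}\,g(\tau_XY,\tau_ZW).
\]
Thus $d\tau=0$ is exactly the Jacobi identity for the bracket $[X,Y]:=\tau_XY$ on $\mathfrak m:=T_1|_p$. This bracket is $g$-invariant, so $\mathfrak m$ is a Lie algebra of compact type. By $\nabla^\tau$-parallelism the structure is the same at every point.

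\emph{Step 3: flatness of $\nabla^\tau$ on $M_1$.} The first Bianchi identity for parallel torsion combined with $\sigma_\tau=0$ shows that $R^\tau$ is pair-symmetric with values in the holonomy algebra $\mathfrak h\subset\mathfrak{so}(\mathfrak m)$. Since $\tau$ is parallel, $\mathfrak h$ acts by derivations of $(\mathfrak m,[\cdot,\cdot])$. We have $\mathfrak h\subset\mathfrak{sp}(n)$ because each $L\in\sf H$ is parallel.

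The goal is $\mathfrak h=0$. Derivations of a compact reductive Lie algebra preserving the metric are inner on the semisimple part, so any $A\in\mathfrak h$ acts as $\ad_Z$ on $[\mathfrak m,\mathfrak m]$, plus a skew-symmetric part on the centre that annihilates $[\mathfrak m,\mathfrak m]$. Pair symmetry then gives $g(R^\tau(X,Y)U,V)=g(R^\tau(U,V)X,Y)$, which allows one to compare curvature with $\ad$.

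\emph{Main obstacle.} I expect this step to be the crux: showing that an element $\ad_Z$ commuting with all of $\sf H$ forces $R^\tau=0$. The first attempt will be a trace argument. Take the quaternionic trace of $R^\tau(X,\cdot)\cdot$ using $I,J,K$, i.e.\ Ricci-type contractions such as $\sum_i R^\tau(e_i,Le_i)$, which vanish for $\mathfrak{sp}(n)$-valued curvature. Then compare with the Killing-form expression $\tr(\ad_X\ad_Y)$ produced by the Ambrose--Singer-type formula $R^\tau(X,Y)=-\ad_{\mathfrak h}$-component of $[X,Y]$ in the enlarged algebra $\mathfrak h\oplus\mathfrak m$. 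The aim is a nonnegative quantity that vanishes only when $\mathfrak h=0$.

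\emph{Step 4: conclusion.} Once $\nabla^\tau$ is flat with parallel torsion, $M_1$ (complete and simply connected) is a Lie group $G$ whose $\nabla^\tau$-parallel fields are left- or right-invariant. The Cartan--Schouten connections $\nabla^\pm$ are exactly these, and $\nabla^\pm=\nabla\pm\frac12\sigma$. This gives a bi-invariant metric $g_G$ and $\tau|_G=\pm\frac12\sigma$, where the sign depends on whether left- or right-invariant fields are parallel and on normalizations.
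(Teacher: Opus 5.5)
The paper gives no proof of this statement; it is quoted from \cite{MS}. Your proposal therefore has to stand on its own, and it does not. Step~3, which carries the whole content of the theorem, is only announced, and the tool you plan to use there is not available.

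Writing $R^\tau(X,Y)$ as minus the $\mathfrak h$-component of $[X,Y]$ in an enlarged algebra $\mathfrak h\oplus\mathfrak m$ is Nomizu's construction. It requires $\nabla^\tau R^\tau=0$ on top of $\nabla^\tau\tau=0$, and parallel closed torsion does not give this. For example, on any oriented Riemannian $3$-manifold, $\tau=t\,\mathrm{vol}_g$ is skew, parallel and closed, while $R^\tau$ is essentially the arbitrary Riemannian curvature.

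What pair symmetry and $[T_1,T_1]=T_1$ actually give is $R^\tau(X,Y)=\ad_{B[X,Y]}$ on $\Lambda^2T_1$, for some symmetric endomorphism $B$ of $T_1$ that preserves the simple ideals by the first Bianchi identity. Its Ricci tensor is, up to sign, $\sum_i\ad_{e_i}B\ad_{e_i}$, which is not sign-definite for general $B$. So the Ricci-flatness you correctly get from $\mathfrak{sp}$-valued curvature gives no ``nonnegative quantity''. It forces $B=0$ only once you prove that $B\mapsto\sum_i\ad_{e_i}B\ad_{e_i}$ is injective on such $B$, or find another way to use $\mathsf H$. On $\mathfrak{su}(2)$-ideals, where Bianchi imposes no condition on $B$, injectivity is a direct check; in general it needs real work.

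Your formulation of the obstacle is also off: a single $\ad_Z$ commuting with $\mathsf H$ forces nothing. Take $Z=\mathrm{diag}(i,i,-2i)\in\mathfrak{su}(3)$. Then $\ad_Z$ vanishes on the centralizer $\cong\mathfrak u(2)$ and acts as $3i$ on the complement $\C^2$. Hence it commutes with the compatible hypercomplex structure given by any orthogonal one on $\mathfrak u(2)$ and by right multiplication by $\mathrm{Im}\,\H$ on $\C^2\cong\H$. The contradiction must come from the curvature identities, not from one holonomy element.

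Step~1 also needs the refinement you only allude to, and this affects Step~3. Since $T_0=\ker\tau$ is the full kernel, $T_1$ has trivial centre, i.e.\ it is semisimple. But $T_1$ is in general not $\mathsf H$-invariant (think of an $\mathfrak{su}(2)\oplus\R$ factor), so the projections onto $T_0$ and $T_1$ do not commute with $\mathsf H$.

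You must split along $\tilde T_1:=T_1+IT_1+JT_1+KT_1$ and $\tilde T_1^\perp\subset T_0$. The centre of the Lie algebra of $G$ is then $\tilde T_1\cap T_0$, where $\nabla^\tau=\nabla$, and flatness there must be proved separately. This is easy but missing from your plan:
\begin{itemize}
\item $\nabla^\tau$ is a product connection for the splitting $T_0\oplus T_1$, so the curvature of the $T_0$-factor kills $T_1$.
\item That curvature commutes with $\mathsf H$, hence kills $\tilde T_1$.
\item Pair symmetry then gives $R^\tau(U,V)=0$ for $U,V\in\tilde T_1\cap T_0$.
\end{itemize}

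Finally, with your convention $\nabla^\tau=\nabla+\frac12\tau$ the conclusion would read $\tau|_G=\pm\sigma$. The statement's $\pm\frac12\sigma$ corresponds to $\nabla^\tau=\nabla+\tau$.
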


Note that the connections $\nabla^\tau$ are flat for $\tau=\pm\frac12\sigma$. Their parallel vector fields are exactly the left-invariant vector fields on $G$ for $\tau=-\frac12\sigma$ and the right-invariant vector fields for $\tau=\frac12\sigma$. In addition, we may assume without loss of generality that $\tau=-\frac12\sigma$. Indeed, applying the automorphism $x\mapsto x^{-1}$ left-invariant and right-invariant vector fields are interchanged. Therefore, an almost hyperhermitian structure compatible with $g$ and preserved by a connection with parallel, skew-symmetric, and closed torsion on $(M,g)=(N,g_N)\times (G,g_G)$ is determined by the choice of three almost complex structures on the Lie algebra $\g$ of $G$, compatible with $g_G$ and satisfying the quaternionic relations \eqref{quat}, and by the choice of a hyperkähler triple on $N$. Such a structure is semi-integrable if and only if its restriction to $G$ is semi-integrable.

Summarising, in view of Theorem \ref{Thm:MS}, in order to understand semi-integrable almost hyperhermitian structures preserved by a connection with parallel, skew-symmetric, and closed torsion on simply connected complete manifolds, we only need to understand left-invariant semi-integrable almost hyperhermitian structures compatible with a bi-invariant metric on simply connected reductive Lie groups of compact type. These, in turn are determined by their restriction to the Lie algebra. We thus make the following definition:

\begin{defn}
Let $(\g,g)$ be a reductive Lie algebra of compact type endowed with an $\ad_\g$-invariant metric $g$. An almost hyperhermitian structure ${\sf H}\subset \mathrm{End}^-({\g},g)$ is called semi-integrable if there exist $I,L\in \sf H$ with $I$ integrable and $\omega_L:=g(L\cdot,\cdot)$ co-closed.
\end{defn}

By the above discussion, semi-integrable almost hyperhermitian structures on $(\g,g)$ are in one to one correspondence with left-invariant semi-integrable almost hyperhermitian structures on the simply connected Lie group $G$ with Lie algebra $\g$, endowed with the bi-invariant metric determined by $g$.

\smallskip

Let $\g$ be a reductive Lie algebra of compact type. We begin by collecting here some well known facts about roots spaces, in order to fix notations and terminology. Let $\mathfrak{t}\subseteq \mathfrak{g}$ be a Cartan subalgebra for $\g$ and $R\subset \mathfrak{t}^*$ the corresponding root system. We denote with $\g^\C=\g \otimes \C$ the complexification of $\g$ and let $\g_{\alpha}$ denote the root space of $\g^\C$ with respect to a root $\alpha \in R$. Let $R^+ \subset R$ be a system of positive roots for $\g$, namely a subset of $R$ such that $R^+ \cup (-R^+)=R$, $R^+ \cap (-R^+)=\emptyset$, $\mathrm{Span}_\C(R^+)=\mathfrak{t}^\C$ and if $\alpha,\beta \in R^+$ are such that $\alpha+\beta \in R$, then $\alpha+\beta \in R^+$. 

Recall that a \emph{Cartan-Weyl basis} for $\g^\C$ with respect to $R^+$ is a choice of generators $E_{\pm\alpha} \in \g_{\pm\alpha}$ and elements $H_\alpha\in \mathfrak{t}^\C$ for all $\alpha \in R^+$ such that
\begin{align*}
&B(E_\alpha,E_{-\alpha})=1\,, &&[E_\alpha,E_{-\alpha}]=H_\alpha\,,\\
&B(T,H_\alpha)=\alpha(T)\,, \quad \forall T\in \mathfrak{t}\,, &&[T,E_\alpha]=\alpha(T)E_{\alpha}\,, \quad \forall T\in \mathfrak{t}\,,
\end{align*}
where $B(X,Y)=\mathrm{tr}(\ad_X\ad_Y)$ is the Cartan-Killing form of $\g^\C$.

When the dimension of $\g$ is even (and thus also its rank $r$), the choice of a system of positive roots and of a complex structure on $\mathfrak{t}$ determines a left-invariant integrable complex structure $I$ on $\g$ defined by decreeing the vectors $E_\alpha$ and $E_{-\alpha}$ to be of type $(1,0)$ and $(0,1)$ respectively. This fact was first observed by Samelson \cite{Samelson} and Wang \cite{Wang} independently. Conversely, Pittie \cite{Pittie} proved that any left-invariant integrable complex structure $I$ arises in this way, namely $I$ singles out a Cartan subalgebra $\mathfrak{t}$ such that $I\vert_{\mathfrak{t}}$ is a complex structure on $\mathfrak{t}$ and it also induces an associated system of positive roots $R^+$ with respect to $\mathfrak{t}$.

\medskip
Let $G$ be the simply connected Lie group with Lie algebra $\g$. Any bi-invariant metric on $G$ is determined by an $\mathrm{Ad}_G$-invariant scalar product $g_G$ on $\g$, and a left-invariant almost hyperhermitian structure $\sf H$ on $(G,g_G)$ is determined by its restriction to $\g$, also denoted by $\sf H$. We choose a $g_G$-orthonormal basis $\{e_i\}$ of $\g$, and extend it by left-invariance to $M$. Then $\nabla_{e_i}e_j=\frac12[e_i,e_j]$, where $\nabla$ is the Levi-Civita connection of $g_G$. 
%Let $(M,{\sf H},g)$ be a hyperhermitian manifold and consider a connection
%\[
%\nabla^\tau:=\nabla+ \tau
%\]
%with parallel and closed skew-symmetric torsion $2\tau$. Here $\nabla$ is the Levi-Civita connection with respect to $g$. Assume that $\nabla^\tau$ is hyperhermitian, i.e. $\nabla^\tau L=0$ for every $L\in {\sf H}$. Fix a local orthonormal frame $e_i$ on $M$. Under these assumptions we 
For every $L\in \mathsf{H}$ we may compute the codifferential of $\omega_L$ as follows:
\begin{equation}\label{eqn:codi}
\delta \omega_L=-\sum_i e_i \lrcorner \nabla_{e_i} \omega_L=-\frac12\sum_i e_i \lrcorner (\ad_{e_i})_* \omega_L=-\frac12\sum_i [e_i,Le_i]\,,
\end{equation}
where $(\ad_{e_i})_*$ denotes the extension of $\ad_{e_i}$ to the tensor algebra of $\g$ as derivation.
Assume now that $I\in \sf H$ is integrable. Then, once we fix a Cartan-Weyl basis $(E_\alpha)_{\alpha \in R^+}$ for $\g$ and a basis $(T_k)$ for the $(1,0)$-part of the Cartan subalgebra $\mathfrak{t}$ induced by $I$ as before, the Lee form of $(L,g)$ can be written as
\begin{equation}\label{eqn:Lee}
\theta_L=L\delta \omega_L=-\Re \left( \sum_{\alpha \in R^+} L[E_\alpha, L \bar{E}_\alpha]+\sum_{k=1}^{r/2} L[T_k,L\bar{T}_k] \right)\,,
\end{equation}
where $r$ is the rank of $\g$, which is even.

\smallskip

We now observe that on (non-abelian) reductive Lie algebras of compact type, among the classes of semi-integrable almost hyperhermitian structures introduced in Theorem \ref{Thm:types} the only ones that can occur are those of type $3$.

\begin{prop}\label{Prop:type3}
A semi-integrable almost hyperhermitian structure $({\sf H},g_G)$ on a non-abelian reductive Lie algebra $\g$ of compact type is necessarily of type $3$.
\end{prop}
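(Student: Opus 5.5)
The plan is to rule out types $1$, $2$ and $4$ of Theorem \ref{Thm:types} directly, by computing codifferentials with formula \eqref{eqn:codi}. Fix $I\in{\sf H}$ integrable. By Pittie's theorem, $I$ determines a Cartan subalgebra $\mathfrak t$, a system of positive roots $R^+$, and a Cartan--Weyl basis such that $\g^{1,0}=\mathfrak t^{1,0}\oplus\bigoplus_{\alpha\in R^+}\g_\alpha$. Since \eqref{eqn:codi} is independent of the orthonormal basis, I will rewrite $\sum_i[e_i,Le_i]$ through a unitary basis $(Z_j)$ of $\g^{1,0}$, namely the normalized $E_\alpha$ together with an orthonormal basis $(T_k)$ of $\mathfrak t^{1,0}$. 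Up to a positive constant, this turns $-\frac12\sum_i[e_i,Le_i]$ into the real part of a sum of terms $[Z_j,L\bar Z_j]$, as in \eqref{eqn:Lee}.

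\emph{Step 1: $\delta\omega_I\neq0$.} Take $L=I$. Then $I\bar Z_j=-i\bar Z_j$, so $\delta\omega_I$ is a nonzero real multiple of $i\bigl(\sum_{\alpha\in R^+}c_\alpha[E_\alpha,E_{-\alpha}]+\sum_k[T_k,\bar T_k]\bigr)$, where the constants $c_\alpha>0$ come from the relation between $g$ and $B$ on each simple factor. The toral terms vanish because $\mathfrak t$ is abelian, and $[E_\alpha,E_{-\alpha}]=H_\alpha$. Hence $\delta\omega_I$ is a nonzero multiple of $i\sum_{\alpha\in R^+}c_\alpha H_\alpha\in\mathfrak t$. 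This vector is nonzero as soon as $R^+\neq\emptyset$, i.e.\ as soon as $\g$ is non-abelian: pairing it with any $T$ in the positive Weyl chamber gives $\sum c_\alpha\alpha(T)\neq0$. So $(I,g)$ is never semi-K\"ahler. This already excludes types $1$ and $2$, in which every element of ${\sf H}$, in particular the integrable $I$, is semi-K\"ahler.

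\emph{Step 2: $\delta\omega_J\perp\mathfrak t$ for every $J\in{\sf H}$ orthogonal to $I$.} Such a $J$ anticommutes with $I$, so it maps $\g^{0,1}$ to $\g^{1,0}$. Therefore each term $[Z_j,J\bar Z_j]$ is a bracket of two $(1,0)$-vectors. Now $[\g^{1,0},\g^{1,0}]\subset\bigoplus_{\alpha\in R^+}\g_\alpha$: brackets of toral elements vanish, $[T,E_\beta]\in\g_\beta$, and $[E_\alpha,E_\beta]\in\g_{\alpha+\beta}$ with $\alpha+\beta\neq0$. Taking real parts, $\delta\omega_J$ lies in the $g$-orthogonal complement of $\mathfrak t$ in $\g$.

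\emph{Step 3: excluding type $4$.} Suppose $L$ is semi-K\"ahler and not orthogonal to $I$. As in the proof of Theorem \ref{Thm:types}, after rotating we may write $L=aI+bJ$ with $a\neq0$ and $J\perp I$. Then $0=\delta\omega_L=a\,\delta\omega_I+b\,\delta\omega_J$. The first term lies in $\mathfrak t$ and the second in $\mathfrak t^\perp$, so $a\,\delta\omega_I=0$, which contradicts Step 1. Hence every semi-K\"ahler element is orthogonal to $I$, and Theorem \ref{Thm:types} leaves only type $3$.

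The main obstacle is the bookkeeping in Step 1: the conversion of \eqref{eqn:codi} into the complex basis, and the check that the normalizations coming from different simple ideals (with $g$ a different multiple of $-B$ on each) cannot make $\sum c_\alpha H_\alpha$ cancel. I would handle the latter by pairing the sum with a regular element of the positive chamber.
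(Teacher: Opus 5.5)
Your proposal is correct and follows essentially the same route as the paper. You write $0=a\,\delta\omega_I+b\,\delta\omega_J$ and split it into a $\mathfrak t$-component and a root-space component. The $\mathfrak t$-component is $a\,\delta\omega_I$, a nonzero multiple of $\sqrt{-1}\sum_{\alpha\in R^+}c_\alpha H_\alpha$. The root-space component contains $\delta\omega_J$, because $J$ maps $\g^{0,1}$ into $\g^{1,0}$ and $[\g^{1,0},\g^{1,0}]\subset\bigoplus_{\alpha\in R^+}\g_\alpha$. Hence $a=0$.

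Your treatment of the constants $c_\alpha>0$, which come from $g$ being a different multiple of $-B$ on each simple ideal, adds a useful detail. So does pairing with a regular element of the positive chamber. Together they make precise the step the paper dismisses as ``impossible since $G$ is not abelian''.
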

\begin{proof}
Let $I\in \mathsf{H}$ be integrable and $L\in \mathsf{H}$ be semi-K\"ahler. By transitivity of the $\mathrm{SO}(3)$-action on bases of {\sf H}, there exists a basis $(I,J,K)$ such that $L=aI+bJ$, for some constants $a,b$ such that $a^2+b^2=1$. As $L$ is semi-K\"ahler, we have
\begin{equation}\label{eqn:ortho}
0=\delta \omega_L=a \delta \omega_I+b\delta \omega_J\,.
\end{equation}
But now, using \eqref{eqn:Lee} we deduce
\begin{eqnarray*}
    \delta \omega_I&=&-\Re \left(\sum_{\alpha \in R^+} [E_\alpha, I \bar{E}_\alpha]+\sum_{k=1}^{r/2} [T_k,I\bar{T}_k]\right)=\Re \left(\sqrt{-1}\sum_{\alpha \in R^+} [E_\alpha, \bar{E}_\alpha]\right)\\
    &=& \Re \left(\sqrt{-1}\sum_{\alpha \in R^+} H_\alpha\right) \in \mathfrak{t} \,,
\end{eqnarray*}
because $[T_k,\bar T_k]=0$ as $\mathfrak{t}$ is abelian. On the other hand
\[
\delta \omega_J=-\Re \left( \sum_{\alpha \in R^+} [E_\alpha, J \bar{E}_\alpha]+\sum_{k=1}^{r/2} [T_k,J\bar{T}_k] \right)\in \bigoplus_{\beta \in R} \mathfrak{g}_\beta \,.
\]
Indeed, for every $\alpha \in R^+$ and $k=1,\dots, r/2$ we can write
\[
J\bar E_\alpha= F_\alpha + S_\alpha \,, \qquad J\bar T_k=F_k+S_k\,, \qquad F_\alpha, F_k \in \bigoplus_{\beta\in R^+} \mathfrak{g}_\beta\,, \,\, S_\alpha,S_k\in \mathfrak{t}\,,
\]
and thus
\[
[E_\alpha,J \bar{E}_\alpha]=[E_\alpha,F_\alpha]+[E_\alpha,S_\alpha]\in \bigoplus_{\beta \in R^+} \mathfrak{g}_\beta\,, \qquad [T_k,J \bar{T}_k]=[T_k,F_k]\in \bigoplus_{\beta \in R^+} \mathfrak{g}_\beta\,,
\]
because $[\mathfrak{t},\mathfrak{g}_\beta]\subseteq \mathfrak{g}_\beta$. 
Hence, both summands on the right-hand side of \eqref{eqn:ortho} vanish. Suppose by contradiction that $a\neq 0$. Then we must have $\delta \omega_I=0$, i.e. $\sum_{\alpha \in R^+} H_\alpha=0$, which is impossible since $G$ is not abelian. Thus $a=0$ and $L=\pm J$, concluding that the semi-integrable almost hyperhermitian structure can only be of type 3.
\end{proof}

The remaining part of the paper is devoted to show that, except for $\mathfrak{su}(2)\oplus \R$, we can always find semi-integrable almost hyperhermitian structures on any reductive Lie algebra of compact type of dimension $4n$. Before proceeding, we make the following definition:

\begin{defn}
A pair of positive roots $(\alpha,\beta)\in R^+\times R^+$ is called \emph{sum-free} if the sum $\alpha+\beta$ is not a root. In particular, if $(\alpha,\beta)$ is sum-free, then $[E_\alpha,E_\beta]=0$.
\end{defn}

As the Lie algebra of a reductive Lie group decomposes into a direct sum of its centre and its simple ideals, we first focus on proving the existence of a certain involution on compact simple Lie groups.

\begin{lem}\label{Lem:invsimple}
Let $G$ be a compact simple Lie group of rank $r$, and $R^+(G)$ a system of positive roots. Then there exists a decomposition
\[
\textstyle
R^+(G)=R_0^+(G)\coprod R_1^+(G)
\]
with
\[
|R_0^+(G)|=\begin{cases}
0  & \text{if } \dim(G)-r \equiv 0 \mod{4}\,,\\
1 & \text{if } \dim(G)-r \equiv 2 \mod{4}\,,\\
\end{cases}
\]
and an involution without fixed points $f\colon R_1^+(G) \to R_1^+(G)$ such that $(\alpha,f(\alpha))$ is sum-free for all $\alpha \in R_1^+(G)$.
\end{lem}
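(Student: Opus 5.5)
Since $\dim(G)-r=2|R^+(G)|$, the condition on $|R_0^+(G)|$ just says that $R_0^+(G)$ is empty when $|R^+(G)|$ is even and is a single root when $|R^+(G)|$ is odd. So the lemma asks for an almost perfect matching of $R^+(G)$ in the graph whose edges are the sum-free pairs $\{\alpha,\beta\}$, $\alpha\neq\beta$, leaving at most one root unmatched. The plan is to go through the classification of compact simple Lie algebras ($\rmA_n,\rmB_n,\rmC_n,\rmD_n,\rmE_6,\rmE_7,\rmE_8,\rmF_4,\rmG_2$) and build the involution $f$ by hand in each case, using the standard realisation of the roots in $\R^N$. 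The choice of positive system is irrelevant, because all positive systems are conjugate under the Weyl group and this conjugation preserves sums and roots.

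Two elementary facts organise the pairing.
\begin{enumerate}[(i)]
\item If $\alpha,\beta\in R^+$ are distinct and $\langle\alpha,\beta\rangle>0$, then $\alpha+\beta$ is not a root. Indeed $\alpha-\beta$ is a root, and in the simply-laced case $|\alpha+\beta|^2>4$ rules out $\alpha+\beta$; in the non-simply-laced cases one checks via root strings.
\item Two strongly orthogonal roots (neither the sum nor the difference is a root) form a sum-free pair.
\end{enumerate}

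For type $\rmA_n$ we have $R^+=\{e_i-e_j: i<j\}$, and $(e_i-e_j)+(e_k-e_l)$ is a root only when $j=k$ or $l=i$. Two roots sharing the first index, $e_i-e_j$ and $e_i-e_k$, are therefore sum-free. So within each row $i$ we pair the roots $e_i-e_j$ ($j>i$) consecutively. Each row with an odd number of roots leaves one leftover, and the leftovers of different rows are then paired using shared second indices ($e_i-e_j$, $e_k-e_j$) or disjoint supports. One bookkeeping argument should show that all leftovers except at most one can be matched. Types $\rmB_n,\rmC_n,\rmD_n$ go the same way: group roots sharing a coordinate with the same sign, for example $e_i-e_j$ with $e_i+e_j$ (whose sum $2e_i$ is not a root in $\rmB_n,\rmD_n$). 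In $\rmC_n$ that sum $2e_i$ is a root, so there we pair instead with $2e_i$ itself, or with $e_i+e_k$, using positive inner products as in (i). For $\rmG_2$, $\rmF_4$, $\rmE_6$, $\rmE_7$, $\rmE_8$ (with $6,24,36,63,120$ positive roots) we exhibit the matching explicitly or by a greedy argument based on (i). One way is to pair each root with one at positive inner product along layers of the height function. An alternative is to pair $\alpha$ with $\theta-\alpha$ or with $\alpha+\theta$-type partners, where $\theta$ is the highest root, and check sum-freeness since $\alpha+(\theta-\alpha)=\theta$; that particular choice fails, so we prefer the inner product rule. For $\rmG_2$ the six positive roots are paired directly.

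The main obstacle is the exceptional algebras and the parity bookkeeping in type $\rmA$. For the exceptional cases the first thing to try is a uniform argument. By (i), the graph on $R^+$ with edges $\langle\alpha,\beta\rangle>0$ is a subgraph of the sum-free graph. This graph is connected and contains many triangles, for example $\{\alpha,\beta,\alpha+\beta\}$, so a Tutte/Gallai–Edmonds type argument (a connected graph in which every vertex lies on an odd cycle and which is factor-critical-like) should give an almost perfect matching. Failing that, we fall back on explicit tables.
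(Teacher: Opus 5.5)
Your framing is the paper's strategy: an almost perfect matching of $R^+$ in the sum-free graph, built type by type from the classification, with the positive system irrelevant up to Weyl conjugation. Your $\rmD_n$ pairing $e_i-e_j\leftrightarrow e_i+e_j$ is exactly the paper's. Your $\rmA_n$ sketch also closes: pair each row from the left, so every leftover is $e_i-e_{n+1}$, and these are pairwise sum-free. Beyond that, however, you do not construct the involution, and the concrete suggestions you make fail.

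In $\rmB_n$, pairing $e_i-e_j$ with $e_i+e_j$ for all $i<j$ strands the $n$ short roots $e_1,\dots,e_n$. Any two of them sum to the root $e_i+e_k$, so they cannot be matched among themselves. Some pairs must be broken, as in the paper: $e_i-e_n\leftrightarrow e_i$ for $i<n$ and $e_i-e_j\leftrightarrow e_i+e_j$ for $i<j<n$. This leaves the pairwise sum-free set $\{e_i+e_n\}_{i<n}\cup\{e_n\}$ of size $n$, which has the parity of $|R^+|=n^2$.

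Your fact (i) is also false for $\rmG_2$. The roots $\alpha_1$ and $2\alpha_1+\alpha_2$ are short roots at angle $\pi/3$ whose sum $3\alpha_1+\alpha_2$ is a root, because root strings of length $4$ occur. So ``one checks via root strings'' only works for $\rmB_n,\rmC_n,\rmF_4$.

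For $\rmC_n$ and the exceptional types nothing is proved, and the proposed general argument is not valid. The sets $\{\alpha,\beta,\alpha+\beta\}$ are never triangles in the sum-free graph, since $\alpha$ and $\beta$ are non-adjacent precisely when $\alpha+\beta\in R$. Moreover, connectivity together with every vertex lying on an odd cycle does not give a near-perfect matching. Take a triangle $vxy$ and three disjoint triangles, each joined to $v$ by one edge. This connected graph has $12$ vertices, all on triangles, yet removing $v$ leaves three odd components, so by Tutte--Berge at least two vertices stay unmatched. ``Factor-critical-like'' is the conclusion you need, not a hypothesis you have verified.

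What is missing are the explicit pairings, which are simple:
\begin{itemize}
\item In $\rmE_6,\rmE_7,\rmE_8$, pair $e_i+e_j\leftrightarrow -e_i+e_j$ (sum $2e_j$).
\item The half-integral positive roots of $\rmE_6$ are pairwise sum-free: they share the part $\tfrac12(e_8-e_7-e_6)$, so any sum of two has squared norm at least $3$.
\item In $\rmE_7,\rmE_8$, pair half-integral roots by flipping the signs of $e_1,e_2$ (the sum has squared norm $6$). This leaves $\gamma=-e_7+e_8$ in $\rmE_7$.
\item $\rmF_4$ uses the $\rmB_4$ pairing plus flipping the sign of $e_4$ in $\tfrac12(e_1\pm e_2\pm e_3\pm e_4)$.
\item $\rmC_n$ uses $e_i-e_n\leftrightarrow 2e_i$ for $i<n$ and $e_i-e_j\leftrightarrow e_i+e_{j+1}$ for $i<j<n$. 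The index $j-1$ printed in the paper should read $j+1$. This leaves the pairwise sum-free set $\{e_k+e_{k+1}\}_{k<n}\cup\{2e_n\}$ of size $n$.
\end{itemize}
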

\begin{proof}
Note that $\dim(G)-r=2|R^+(G)|$. Therefore we will go through the list of all compact simple Lie groups discussing them according to the parity of $|R^+(G)|$. In each case, we will define the value of $f\colon R^+_1(G) \to R^+_1(G)$ using the following notation
\[
\alpha \longleftrightarrow \beta
\]
to mean $f(\alpha)=\beta$ and $f(\beta)=\alpha$.

We recall here the simple root systems following the conventions of Bourbaki \cite{Bour}:

\begin{table}[htp]
\centering
\renewcommand{\arraystretch}{1.2}
\begin{tabular}{|c|c|c|}\hline
Dynkin type&\multicolumn{2}{|c|}{Positive roots $R^+$}\\\hline\hline
$\rmA_n$&$e_i-e_j$&$1\leq i<j\leq n+1$\\\hline
\multirow{3}*{$\rmB_n$}&$e_i$&$1\leq i\leq n$\\
&$e_i-e_j$&\multirow{2}*{$1\leq i<j\leq n$}\\
&$e_i+e_j$&\\\hline
\multirow{3}*{$\rmC_n$}&$e_i-e_j$&\multirow{2}*{$1\leq i<j\leq n$}\\
&$e_i+e_j$&\\
&$2e_i$&$1\leq i\leq n$\\\hline
\multirow{2}*{$\rmD_n$}&$e_i-e_j$&\multirow{2}*{$1\leq i<j\leq n$}\\
&$e_i+e_j$&\\\hline
\multirow{3}*{$\rmE_6$}&$-e_i+e_j$&\multirow{2}*{$1\leq i<j\leq 5$}\\
&$e_i+e_j$&\\
&$\tfrac12(e_8-e_7-e_6+\sum_{i=1}^5(-1)^{\nu(i)}e_i)$&$\sum_{i=1}^5\nu(i)$ even\\\hline
\multirow{3}*{$\rmE_7$}&$-e_i+e_j$&$1\leq i<j\leq 6$ or $(i,j)=(7,8)$\\
&$e_i+e_j$&$1\leq i<j\leq 6$\\
&$\tfrac12(e_8-e_7+\sum_{i=1}^6(-1)^{\nu(i)}e_i)$&$\sum_{i=1}^6\nu(i)$ odd\\\hline
\multirow{3}*{$\rmE_8$}&$-e_i+e_j$&\multirow{2}*{$1\leq i<j\leq 8$}\\
&$e_i+e_j$&\\
&$\tfrac12(e_8+\sum_{i=1}^7(-1)^{\nu(i)}e_i)$&$\sum_{i=1}^7\nu(i)$ even\\\hline
\multirow{4}*{$\rmF_4$}&$e_i$&$1\leq i\leq 4$\\
&$e_i-e_j$&\multirow{2}*{$1\leq i<j\leq 4$}\\
&$e_i+e_j$&\\
&$\tfrac12(e_1\pm e_2\pm e_3\pm e_4)$&\\\hline
$\rmG_2$&\multicolumn{2}{|c|}{$\alpha_1,\ \alpha_2,\ \alpha_1+\alpha_2,\ 2\alpha_1+\alpha_2,\ 3\alpha_1+\alpha_2,\ 3\alpha_1+2\alpha_2$}\\\hline
\end{tabular}
\medskip
\caption{Simple root systems in the convention of \cite{Bour}. For $\rmG_2$ we express the roots in a basis of simple roots $\alpha_1,\alpha_2$.}
\label{roots}
\end{table}

{\bf Case 1.} $|R^+(G)|\equiv 0 \mod{2}$. In this case we set $R^+_0(G)=\emptyset$ hence we only need to define an involution $f$ of $R^+(G)$ such that for each each element $\alpha\in R^+(G)$, the pair $(\alpha,f(\alpha))$ is sum-free.
\begin{itemize}
\item {\bf Case $\mathrm{A}_n$ with $n\equiv 0,3 \mod{4}$.} We set
\begin{align*}
e_{2i-1}-e_j &\longleftrightarrow e_{2i}-e_j\,, && 1<2i<j\leq n+1\,,\\
e_{4i-3}-e_{4i-2} &\longleftrightarrow e_{4i-1}-e_{4i}\,, && 1\leq i\leq k \,,
\end{align*}
where $k:=[\frac{n+1}4]$.
\item {\bf Case $\mathrm{B}_{2n}$.} We define
\begin{align*}
e_i-e_{2n}& \longleftrightarrow e_i\,, && 1\leq i<2n\,,\\
e_i-e_j& \longleftrightarrow e_i+e_j\,, && 1\leq i<j<2n\,,
\end{align*}
and pair the remaining roots $e_1+e_{2n},\dots e_{2n-1}+e_{2n},e_{2n}$ arbitrarily.

\item {\bf Case $\mathrm{C}_{2n}$.} We define
\begin{align*}
e_i-e_{2n}& \longleftrightarrow 2e_i\,, && 1\leq i<2n\,,\\
e_i-e_j& \longleftrightarrow e_i+e_{j-1}\,, && 1\leq i<j<2n\,,
\end{align*}
and pair the remaining roots $e_1+e_{2},e_2+e_3\dots e_{2n-1}+e_{2n},2e_{2n}$ arbitrarily.

\item {\bf Case $\mathrm{D}_n$.} We set
\begin{align*}
e_i-e_j &\longleftrightarrow e_i+e_j\,, && 1\leq i<j \leq n\,.
\end{align*}

\item {\bf Case $\mathrm{E}_6$.} We define the pairing
\begin{align*}
e_i+e_{j} &\longleftrightarrow -e_i+e_j\,, && 1\leq i<j\leq 5\,,
\end{align*}
and pair the remaining roots $\frac{1}{2}(e_8-e_7-e_6+ \sum_{i=1}^5(-1)^{\nu(i)} e_i)$ arbitrarily.

\item {\bf Case $\mathrm{E}_8$.} We set
\[
\begin{aligned}
e_i+e_{j} &\longleftrightarrow -e_i+e_j\,, \qquad 1\leq i<j\leq 8\,,\\
\frac{1}{2}\left(e_8+ \sum_{i=1}^7(-1)^{\nu(i)} e_i\right)& \longleftrightarrow \frac{1}{2}\left(e_8-(-1)^{\nu(1)}e_1-(-1)^{\nu(2)}e_2+ \sum_{i=3}^7(-1)^{\nu(i)} e_i\right)\,.
\end{aligned}
\]
\item {\bf Case $\mathrm{F}_4$.} We define
\begin{align*}
e_i-e_{2n}& \longleftrightarrow e_i\,, && 1\leq i<2n\,,\\
e_i-e_j& \longleftrightarrow e_i+e_j\,, 
&& 1\leq i<j<2n\,,\\
\frac{1}{2}(e_1\pm e_2 \pm e_3 \pm e_4) &\longleftrightarrow \frac{1}{2}(e_1\pm e_2 \pm e_3 \pm (- e_4)) \,,
\end{align*}
and pair the remaining roots $e_1+e_{4},e_2+e_4,e_{3}+e_{4},e_{4}$ arbitrarily.

\item {\bf Case $\mathrm{G}_2$.}
There are exactly 4 possibilities. The first is
\[
\begin{aligned}
\alpha_1& \longleftrightarrow 3\alpha_1+\alpha_2\,,\\
\alpha_2& \longleftrightarrow 2\alpha_1+\alpha_2\,,\\
\alpha_1+\alpha_2&\longleftrightarrow 3\alpha_1+2\alpha_2\,. \\
\end{aligned}
\]
The second possibility is
\[
\begin{aligned}
\alpha_1& \longleftrightarrow 3\alpha_1+2\alpha_2\,,\\
\alpha_2& \longleftrightarrow 2\alpha_1+\alpha_2\,,\\
\alpha_1+\alpha_2&\longleftrightarrow 3\alpha_1+\alpha_2\,. \\
\end{aligned}
\]
The third possibility is
\[
\begin{aligned}
\alpha_1& \longleftrightarrow 3\alpha_1+\alpha_2\,,\\
\alpha_2& \longleftrightarrow \alpha_1+\alpha_2\,,\\
2\alpha_1+\alpha_2&\longleftrightarrow 3\alpha_1+2\alpha_2\,. \\
\end{aligned}
\]
The fourth possibility is
\[
\begin{aligned}
\alpha_1& \longleftrightarrow 3\alpha_1+2\alpha_2\,,\\
\alpha_2& \longleftrightarrow \alpha_1+\alpha_2\,,\\
2\alpha_1+\alpha_2&\longleftrightarrow 3\alpha_1+\alpha_2\,. 
\end{aligned}
\]
\end{itemize}

\medskip

{\bf Case 2.} $|R^+|\equiv 1 \mod{2}$. In this case we first define the involution $f$ on all but one positive root $\gamma$ and we set $R^+_0(G)=\{\gamma\}$.
\begin{itemize}
\item {\bf Case $\mathrm{A}_n$ with $n\equiv 1,2 \mod{4}$.} We define the pairing
\begin{align*}
e_{2i-1}-e_j &\longleftrightarrow e_{2i}-e_j\,, && 1<2i<j\leq n+1\,,\\
e_{4i-3}-e_{4i-2} &\longleftrightarrow e_{4i-1}-e_{4i}\,, && 1\leq i\leq k\,,
\end{align*}
where $k:=[\frac{n-1}4]$, and the remaining root is $\gamma=e_{4k+1}-e_{4k+2}$.
\item {\bf Case $\mathrm{B}_{2n+1}$.} We define the pairing
\begin{align*}
e_i-e_{2n+1}& \longleftrightarrow e_i\,, && 1\leq i<2n+1\,,\\
e_i-e_j& \longleftrightarrow e_i+e_j\,, && 1\leq i<j<2n+1\,,
\end{align*}
choose $\gamma$ to be any of the roots $e_1+e_{2n+1},\dots e_{2n-1}+e_{2n+1},e_{2n+1}$ and pair the remaining ones arbitrarily.

\item {\bf Case $\mathrm{C}_{2n+1}$.} We set
\begin{align*}
e_i-e_{2n+1}& \longleftrightarrow 2e_i\,, && 1\leq i<2n+1\,,\\
e_i-e_j& \longleftrightarrow e_i+e_{j-1}\,, &&1\leq i<j<2n+1\,, 
\end{align*}
choose $\gamma$ to be any of the roots $e_1+e_{2},e_2+e_3\dots e_{2n}+e_{2n+1},2e_{2n+1}$ and pair the remaining ones arbitrarily.

\item {\bf Case $\mathrm{E}_7$.} We define the pairing
\[
\begin{aligned}
e_i+e_{j} &\longleftrightarrow -e_i+e_j\,, \qquad 1\leq i<j\leq 6\,,\\
\frac{1}{2}\left(e_8-e_7+ \sum_{i=1}^6(-1)^{\nu(i)} e_i\right)& \longleftrightarrow \frac{1}{2}\left(e_8-e_7-(-1)^{\nu(1)}e_1-(-1)^{\nu(2)}e_2+ \sum_{i=3}^6(-1)^{\nu(i)} e_i\right)\,,
\end{aligned}
\]
and the remaining root is $\gamma=-e_7+e_8$. \qedhere
\end{itemize}
\end{proof}

We can now extend Lemma \ref{Lem:invsimple} to the general case of reductive Lie groups of compact type.

\begin{lem}\label{Lem:invcpt}
Let $G$ be a reductive Lie group of compact type, dimension $4n$, and rank $r$, and $R^+(G)$ a system of positive roots. Then there exists a decomposition
\[
\textstyle
R^+(G)=R_0^+(G)\coprod R_1^+(G)
\]
with
\[
|R_0^+(G)|=\begin{cases}
0  & \text{if } r \equiv 0 \mod{4}\,,\\
1 & \text{if } r \equiv 2 \mod{4}\,,\\
\end{cases}
\]
and an involution without fixed points $f\colon R_1^+(G) \to R_1^+(G)$ such that the pair $(\alpha,f(\alpha))$ is sum-free for all $\alpha \in R_1^+(G)$.
\end{lem}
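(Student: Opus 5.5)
The plan is to reduce to the simple case treated in Lemma \ref{Lem:invsimple}. Write $\g=\mathfrak{z}\oplus\g_1\oplus\dots\oplus\g_m$, with $\mathfrak{z}$ the centre and $\g_j$ the simple ideals. The Cartan subalgebra is then $\mathfrak{t}=\mathfrak{z}\oplus\mathfrak{t}_1\oplus\dots\oplus\mathfrak{t}_m$, and the root system is the disjoint union $R=\coprod_j R(G_j)$. Each root of $G_j$ is extended by zero on the other summands. The positive system $R^+(G)$ restricts to a positive system $R^+(G_j)$ on each factor, since a system of positive roots of a product is a union of positive systems on the factors.

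The key observation is that if $\alpha\in R(G_j)$ and $\beta\in R(G_k)$ with $j\neq k$, then $\alpha+\beta$ is not a root: every root lives in a single factor. Hence any pairing of roots from distinct simple factors is automatically sum-free.

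I then apply Lemma \ref{Lem:invsimple} to each $G_j$. This gives decompositions $R^+(G_j)=R^+_0(G_j)\coprod R^+_1(G_j)$ with involutions $f_j$, where $|R^+_0(G_j)|\in\{0,1\}$. Let $\gamma_1,\dots,\gamma_s$ be the leftover roots, one for each $j$ with $|R^+(G_j)|$ odd. I pair them as $\gamma_1\leftrightarrow\gamma_2$, $\gamma_3\leftrightarrow\gamma_4$, and so on. These pairs are sum-free by the observation above, since distinct $\gamma$'s belong to distinct simple factors. If $s$ is odd, one $\gamma$ is left over and forms $R^+_0(G)$; otherwise $R^+_0(G)=\emptyset$. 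The involution $f$ is the union of the $f_j$ with these new pairs. It has no fixed points, and every pair is sum-free.

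It remains to check the cardinality condition, which is a parity count. We have $\dim G=r+2|R^+(G)|=4n$, and $r$ is even because $\dim G$ is even and $|R^+(G)|$ is an integer. By construction $|R^+_0(G)|\equiv s\equiv |R^+(G)| \pmod 2$. Now $2|R^+(G)|=4n-r$, so $|R^+(G)|$ is even exactly when $r\equiv 0\pmod 4$, and odd when $r\equiv 2\pmod 4$. Here $r$ counts the central dimension as well, since it is the full rank of $\g$. This yields the stated values of $|R^+_0(G)|$.

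The main obstacle, mild here, is to justify that the positive system and the root system split along the simple factors, and that cross-factor sums are never roots. This follows because $[\g_j,\g_k]=0$ for $j\neq k$: a vector $\alpha+\beta$ with both parts nonzero cannot be the weight of a root vector, since root spaces lie in a single $\g_j^\C$. The abelian case, with no roots at all, is consistent: then $r=4n\equiv 0$ and $R^+_0=\emptyset$.
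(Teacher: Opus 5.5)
Your proposal is correct and follows the paper's own proof essentially step for step. You split into the centre and simple ideals and apply Lemma \ref{Lem:invsimple} to each factor. You pair the leftover roots $\gamma_j$, which are sum-free because they lie in distinct simple summands. Finally, you use $4n=r+2|R^+(G)|$ to match the parity of the number of leftover roots with $r \bmod 4$.
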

\begin{proof}
The Lie algebra $\g$ of $G$ decomposes as
\[
\g=\mathfrak{z}(\g)\oplus \g_1 \oplus \cdots \oplus \g_s\,,
\]
where $\mathfrak{z}(\g)$ is the centre of $\g$ and $\g_i$ is the Lie algebra of a compact simple Lie group $G_i$ for all $i=1,\dots,s$. Therefore
\[
R^+(G)=\coprod_{i=1}^s R^+(G_i)\,.
\]
For all $i=1,\dots,s$ there exists a decomposition $R^+(G_i)=R_0^+(G_i)\coprod R_1^+(G_i)$ and an involution $f_i \colon R^+(G_i)\to R^+(G_i)$ with the properties stated in Lemma \ref{Lem:invsimple}. Let $\mathcal{J}$ be the set of indices in $\{1,\dots,s\}$ such that $|R^+_0(G_i)|=1$. Pick any $j_0\in \mathcal{J}$ and define
\[
R^+_0(G):=\begin{cases}
\emptyset, & \text{if } |\mathcal{J}| \equiv 0 \mod{2}\,,\\
R^+_0(G_{j_0}), & \text{if } |\mathcal{J}| \equiv 1 \mod{2}\,,
\end{cases}
\]
and of course
\[
R^+_1(G):=R^+(G)\setminus R^+_0(G)\,.
\]
We claim that $R^+_0(G)$ satisfies the requirements of the lemma. Indeed, since $\dim(G)=4n$ we have that $r$ is a multiple of $4$ if and only if the number of roots of $G$ is a multiple of $4$. This, in turn, is equivalent to $|R^+(G)|$ being even. Finally, $|R^+(G)|$ has the same parity of $|\mathcal{J}|$ concluding the proof of the claim.

Now, let
\[
\tilde{\mathcal{J}}:=\begin{cases}
\mathcal{J}, & \text{if } |\mathcal{J}| \equiv 0 \mod{2}\,,\\
\mathcal{J}\setminus \{j_0\}, & \text{if } |\mathcal{J}| \equiv 1 \mod{2}\,.
\end{cases}
\]
and choose any involution $\tilde f$ of $\tilde{\mathcal{J}}$ such that $\tilde f(l)\neq l$ for all $l\in \tilde{\mathcal{J}}$. Denote $\gamma_i $ the root in $R^+_0(G_i).$ Finally, we define the involution $f\colon R^+_1(G) \to R^+_1(G)$ by setting $f\vert_{R^+_1(G_i)}=f_i,$ for all $i=1,\dots,s,$ and $ f(\gamma_l)=\gamma_{{\tilde f}(l)},$ for any $l\in \tilde{\mathcal{J}}.$ The fact that for every $\alpha\in R^+_1(G)$ the pair $(\alpha,f(\alpha))$ is sum-free follows from Lemma \ref{Lem:invsimple} and the fact that for $i\neq j$ the pair $(\gamma_i,\gamma_j)$ is sum-free, since $\gamma_i$ and $\gamma_j$ belong to different simple summands.
\end{proof}

We are ready to prove the existence of semi-integrable almost hyperhermitian structures.

\begin{thm}\label{Thm:reductive}
Let $\g$ be a reductive Lie algebra of compact type and dimension $4n$, equipped with an $\ad_\g$-invariant metric $g_G$. Then there exists a semi-integrable almost hyperhermitian structure $({\sf H},g_G)$ on $\g$ if and only if $\g \neq \mathfrak{su}(2)\times \R $.
\end{thm}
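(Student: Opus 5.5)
By Proposition \ref{Prop:type3}, any semi-integrable structure on a non-abelian $\g$ is of type $3$. So for the ``if'' direction I will build $\mathsf{H}=\{aI+bJ+cK\}$ with three properties: $I$ is a Samelson--Wang complex structure, $J$ anticommutes with $I$, and $\delta\omega_J=0$. Corollary \ref{Cor:S1semi} then makes the whole circle $I^\perp$ semi-Kähler. The abelian case $\g=\R^{4n}$ is trivial: take the flat hyperkähler structure.

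\textbf{The structures $I$ and $J$.} Fix a Cartan subalgebra $\mathfrak{t}$ and a positive system $R^+$. Choose $I$ with $E_\alpha$ of type $(1,0)$ and an orthogonal complex structure on $\mathfrak{t}$. A real $J$ anticommuting with $I$ and compatible with $g_G$ is the same as a conjugate-linear map sending $\g^{1,0}$ to $\g^{0,1}$ with $J^2=-1$, isometric and skew. I will define it on root vectors using the involution $f$ of Lemma \ref{Lem:invcpt}: set $J E_\alpha = c_\alpha \bar E_{f(\alpha)}$ (a multiple of $E_{-f(\alpha)}$) for $\alpha\in R_1^+$, with unit constants $c_\alpha$ chosen so that $J^2=-1$, and extend by conjugation.

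\textbf{The codifferential when $r\equiv 0 \bmod 4$.} Here $R_0^+=\emptyset$, and I take $J$ on $\mathfrak{t}$ to be any quaternionic structure on $\mathfrak{t}$ (possible since $4\mid r$) anticommuting with $I|_{\mathfrak{t}}$. In formula \eqref{eqn:codi}/\eqref{eqn:Lee}, written in the Cartan--Weyl frame, each root term is a multiple of $[E_\alpha, E_{f(\alpha)}]$. This vanishes because $(\alpha,f(\alpha))$ is sum-free. The Cartan terms $[T_k,J\bar T_k]$ lie in $[\mathfrak{t},\mathfrak{t}]=0$. Hence $\delta\omega_J=0$. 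I will double-check the orthonormality and normalisation of the frame here, but nothing else is needed.

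\textbf{The case $r\equiv 2\bmod 4$ (main obstacle).} Now $R_0^+=\{\gamma\}$ and $\dim\mathfrak{t}=r$ is not divisible by $4$. I will split $\mathfrak{t}=\mathfrak{t}'\oplus\mathfrak{t}''$ into $I$-invariant pieces with $\dim\mathfrak{t}''=2$ and $4\mid\dim \mathfrak{t}'$. On $\mathfrak{t}'$ I take a quaternionic $J$ as before. On the $4$-dimensional real space $\mathfrak{t}''\oplus(\g_\gamma\oplus\g_{-\gamma})\cap\g$, I let $J$ send $T''$ (spanning $(\mathfrak{t}'')^{1,0}$) to a multiple of $\bar E_\gamma$. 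Two terms then survive in $\delta\omega_J$:
\[
[T'',J\bar T'']\in\C E_\gamma \quad\text{and}\quad [E_\gamma,J\bar E_\gamma]\in\C E_\gamma,
\]
with coefficients proportional to $\gamma(T'')$ and $\gamma(\overline{T''})$ respectively. The task is to choose $\mathfrak{t}''$, i.e.\ to position $T''$ relative to $H_\gamma$ and to the kernel of $\gamma$, together with the phase of $J$, so that these two contributions cancel. I would first try taking $\mathfrak{t}''$ spanned by $iH_\gamma$ and a vector in $\ker\gamma$; then $\gamma(T'')$ and $\gamma(\overline{T''})$ have a controlled ratio, and the cancellation reduces to a single complex equation. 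This is where $\mathrm{rank}\ge 2$ beyond the $\gamma$-direction, or the presence of another simple factor or central directions, should provide the needed freedom. In particular I will need a vector of $\ker\gamma$ that can be paired with $iH_\gamma$ while leaving $\mathfrak{t}'$ of dimension divisible by $4$.

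\textbf{The exceptional case $\mathfrak{su}(2)\oplus\R$.} For the ``only if'' direction, note that $\dim\g=4$ and $\mathfrak{t}=\mathfrak{t}''$ is $2$-dimensional, spanned by $iH_\gamma$ and the centre $z$. By Pittie's theorem every integrable $I$ has the form above, so $J$ necessarily maps $\mathfrak{t}$ onto the $\gamma$-root plane. Computing $\delta\omega_J=-\tfrac12\sum_i[e_i,Je_i]$ in the orthonormal basis $\{z,\ iH_\gamma/|H_\gamma|,\ \text{root plane}\}$, the term from $z$ vanishes since $z$ is central. What remains is a single nonzero multiple of a root vector, so the cancellation used above is impossible. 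Therefore no orthogonal $J$ is coclosed, and by Proposition \ref{Prop:type3} there is no semi-integrable structure.
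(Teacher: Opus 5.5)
Your treatment of $r\equiv 0 \bmod 4$ is the paper's construction. Your ``only if'' argument (Proposition~\ref{Prop:type3}, Pittie, and a direct computation on the $4$-dimensional algebra) is a fine self-contained substitute for the citation of \cite[Theorem 4.4]{MS}. The gap is the case $r\equiv 2\bmod 4$: you leave it open, and the mechanism you propose there cannot work.

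Normalise so that $JT''=c\,\bar E_\gamma$ with $|c|=1$. Since $J$ is real and $J^2=-1$, we get $J\overline{T''}=\bar c\,E_\gamma$ and $J\bar E_\gamma=-\bar c\,T''$. Hence
\[
[T'',J\overline{T''}]=\bar c\,\gamma(T'')E_\gamma=[E_\gamma,J\bar E_\gamma].
\]
The two terms carry the \emph{same} coefficient, so they add up to $2\bar c\,\gamma(T'')E_\gamma$ whatever the phase of $J$. Thus $\delta\omega_J=0$ forces $\gamma(T'')=0$.

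Your proposed $\mathfrak{t}''=\mathrm{span}(iH_\gamma,v)$ with $v\in\ker\gamma$ has $\gamma(T'')\neq 0$. In fact the computation on the block $\mathfrak{t}''\oplus(\g_\gamma\oplus\g_{-\gamma})\cap\g$ is then literally your $\mathfrak{su}(2)\oplus\R$ computation, which you yourself show never vanishes. The paper makes the opposite choice: $T\in\mathfrak{t}^{1,0}$ with $\gamma(T)=0$, i.e.\ $\mathfrak{t}''\subset\ker\gamma$, which kills both terms separately. Such a $T$ exists if and only if $\dim_{\C}\mathfrak{t}^{1,0}\geq 2$. With $\dim\g=4n$, the only cases with $r=2$ are $\mathfrak{su}(2)\oplus\R$ and $\mathfrak{su}(3)$.

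The second missing ingredient is $\mathfrak{su}(3)$. It is simple, has trivial centre and rank $2$, and is not excluded by the theorem. So your hope that extra rank, another simple factor or central directions ``provide the needed freedom'' does not apply. There $\gamma$ cannot vanish on the line $\mathfrak{t}^{1,0}$. By the computation above, no $J$ of your shape (a sum-free pairing of the two remaining positive roots together with $J\mathfrak{t}^{1,0}=\C\bar E_\gamma$) is coclosed.

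The paper leaves the sum-free framework:
\begin{itemize}
\item it pairs the non-sum-free roots $\alpha_1=e_1-e_2$ and $\alpha_3=e_2-e_3$;
\item it sends $T$ to a multiple of $\bar E_{\alpha_2}$, where $\alpha_2=\alpha_1+\alpha_3$ is the highest root;
\item it lets the nonzero bracket $[E_{\alpha_1},E_{\alpha_3}]\in\g_{\alpha_2}$ cancel the contribution of the $T$- and $E_{\alpha_2}$-terms, which is a multiple of $\alpha_2(T)E_{\alpha_2}$.
\end{itemize}
Unit-modulus coefficients making this cancellation work exist precisely because of the identity \eqref{eqn:A2}, $|\alpha_2(T)|=1/\sqrt6$ for the metric $-B$ and $\langle T,\bar T\rangle=1$, which has to be checked by hand.
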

\begin{proof}
It is known that $\g =\mathfrak{su}(2)\times \R $ cannot admit any semi-K\"ahler structure, see \cite[Theorem 4.4]{MS}. Therefore we assume $\g \neq \mathfrak{su}(2)\times \R $.

The choice of a Cartan subalgebra $\mathfrak{t}$ and a system of positive roots $R^+(G)$ on $\mathfrak{g}$ determines a (not necessarily unique) complex structure $I$ on $\g$. Let $\mathfrak{g}^\C=\g^{1,0} \oplus \g^{0,1}$ be the splitting of the complexified Lie algebra with respect to $I$. The elements in $\g^{1,0}$ are generated by $\mathfrak{t}^{1,0}$ and the positive root spaces. Any almost hyperhermitian structure ${\sf H}$ containing $I$ is completely determined by an almost Hermitian structure $J$ anti-commuting with $I$, or, equivalently, satisfying $J\mathfrak{g}^{1,0}=\mathfrak{g}^{0,1}$.

Suppose for the moment that $\mathfrak{g}\neq \mathfrak{su}(3)$; we will treat the case of $\mathfrak{su}(3)$ separately. Let $R^+(G)=R^+_0(G) \coprod R^+_1(G)$ be a decomposition of the positive roots as in Lemma \ref{Lem:invcpt} and $f\colon  R^+_1(G) \to  R^+_1(G)$ an associated involution without fixed points. We define $J$ by setting
\[
J E_\alpha:=  E_{-f(\alpha)}\,, \qquad J E_{-\alpha}:=-E_{f(\alpha)}\,, \qquad \text{for all }\alpha \in R^+_1(G)\,,
\]
and
\[
J E_\gamma:= \bar T\,, \quad JT:=-\bar E_{\gamma}\,, \qquad \gamma \in R^+_0(G)\,,
\]
where $T\in \mathfrak{t}^{1,0}$ is such that $\gamma(T)=0$ and $g_G(T,\bar T)=g_G(E_\gamma,\bar E_\gamma)$.  Note that, since we assumed $\mathfrak{g}\neq \mathfrak{su}(3)$, the Cartan subalgebra $\mathfrak{t}$ has dimension strictly bigger than $2$, and thus there always exists such a $T$. Finally, $J$ is extended to the remainder of the Cartan subalgebra arbitrarily in such a way that it is compatible with the metric $g_G$ and $J\mathfrak{t}^{1,0}=\mathfrak{t}^{0,1}$.

Evidently $J^2=-\mathrm{Id}$ and $IJ=-JI$, so $I$ and $J$ define an almost hyperhermitian structure $\mathsf{H}$ on $(\g,g)$ which extends to a left-invariant almost hyperhermitian structure on $G$. Being defined in such a way, it is straightforward to check that each term in the sum \eqref{eqn:Lee} applied to $L=J$ vanishes, thereby showing that $(\mathsf{H},g)$ forms a semi-integrable almost hyperhermitian structure.

To conclude the proof we only need to show the result for $\g = \mathfrak{su}(3)$. We consider the Cartan subalgebra $\mathfrak{t}$ of diagonal matrices. Since the bi-invariant metric on $\g$ is unique, up to rescaling, we may assume that it is $\langle \cdot,\cdot \rangle =-B$. Set $e_i\in \mathfrak{t}^*$ to be the linear form
\[
e_i\left( \begin{smallmatrix} t_1 & 0 & 0\\
0 & t_2 & 0\\
0 & 0 & t_3 \end{smallmatrix} \right):= t_i\,,
\]
for $i=1,2,3.$ A system of positive roots for $\g$ with respect to $\mathfrak{t}$ is then given by
\[
\alpha_1=e_1-e_2, \qquad \alpha_2=e_1-e_3, \qquad \alpha_3=e_2-e_3\,.
\]
Let $E_{ij}\in \g\otimes \C=\mathfrak{sl}(3,\C)$ be the matrix with $1$ in the $(i,j)$ entry and $0$ elsewhere. It is straightforward to check that $E_{ij}\in \g_{e_i-e_j}$. It is well known that the Cartan-Killing form of $\g$ and $\g\otimes \C$ is $B(X,Y)=6\mathrm{tr}(XY)$. To see this, simply observe that $\mathrm{tr}(XY)$ is bi-invariant and so it must be a constant multiple of $B$, by Schur's Lemma. The right constant can now be found by evaluation on two arbitrary elements $X,Y$. We therefore compute $B(E_{ij},E_{ji})=6$. Now, the normalized matrices
\[
E_{\alpha_1}=\frac{1}{\sqrt{6}}E_{12}, \qquad E_{\alpha_2}=\frac{1}{\sqrt{6}}E_{13}, \qquad E_{\alpha_3}=\frac{1}{\sqrt{6}}E_{23}
\]
form a Cartan-Weyl basis of $\g \otimes \C$ and satisfy the commutation relations
\[
[E_{\alpha_1},E_{\alpha_2}]=0, \qquad [E_{\alpha_1},E_{\alpha_3}]=\frac{1}{\sqrt{6}}E_{\alpha_2}, \qquad 
[E_{\alpha_2},E_{\alpha_3}]=0\,. 
\]
Let $T\in \mathfrak{t}^{1,0}$ be an element such that $\langle T,\bar T \rangle=1$, where we extended $\langle\cdot,\cdot \rangle $ to $\g \otimes \C$ bilinearly. Set
\[
JT=a\bar T+\sum_i b_{i}\bar E_{\alpha_i}, \qquad JE_{\alpha_j}=c_{j}\bar T + \sum_i d_{ji} \bar E_{\alpha_i}\,, \qquad a,b_i,c_j,d_{ji}\in \C\,,
\]
for the generic endomorphism $J$ that anticommutes with $I$. We observe that
\[
E_{\alpha_1}=\frac{1}{2\sqrt{6}}\left( E_{12}-E_{21} \right)-\frac{\sqrt{-1}}{2\sqrt{6}}\left( \sqrt{-1} E_{12}+\sqrt{-1}E_{21} \right),
\]
where each of the summands between braces on the right-hand side lies in $\mathfrak{su}(3)$. Thus
\[
\begin{split}
\langle E_{\alpha_1},\bar E_{\alpha_1}\rangle &=\frac{1}{24} \langle E_{12}-E_{21},E_{12}-E_{21} \rangle + \frac{1}{24} \langle \sqrt{-1} E_{12}+\sqrt{-1}E_{21},\sqrt{-1} E_{12}+\sqrt{-1}E_{21}\rangle\\
%& = -\frac{1}{6}B( E_{12}-E_{21},E_{12}-E_{21} \rangle - \frac{1}{6} B(\sqrt{-1} E_{12}+\sqrt{-1}E_{21},\sqrt{-1} E_{12}+\sqrt{-1}E_{21})\\
&=  -\frac{1}{4}\tr \left(( E_{12}-E_{21})^2\right)- \frac{1}{4}\tr \left((\sqrt{-1} E_{12}+\sqrt{-1}E_{21})^2\right)=\frac{1}{2}+\frac{1}{2}=1\,.
\end{split}
\]
Similarly, one sees that $\langle E_\alpha, \bar E_\alpha \rangle =1$. Therefore, in order for the metric to be compatible with $J$ we must have
\[
\begin{aligned}
b_i&=\langle E_{\alpha_i}, J T\rangle =-\langle JE_{\alpha_i},  T\rangle = c_i,\\
d_{ji}&= \langle E_{\alpha_i}, J E_{\alpha_j}\rangle=-\langle E_{\alpha_j}, J E_{\alpha_i}\rangle=-d_{ij},\\
1&=\langle T, \bar T \rangle=\langle JT, J\bar T \rangle =|a|^2+\sum_i |b_i|^2,\\
1&=\langle E_{\alpha_j}, \bar E_{\alpha_j} \rangle=\langle JE_{\alpha_j}, J\bar E_{\alpha_j} \rangle=|c_j|^2+\sum_i |d_{ij}|^2\,.
\end{aligned} 
\]
Using these identities, the condition $J^2=-{\rm Id }$ is then easily seen to be equivalent to
\[
a=0, \qquad \sum_l b_l \bar d_{kl}=0, \qquad -b_j\bar b_i+\sum_l d_{lj} \bar d_{il}=-\delta_{ij}, \qquad  i,j,k=1,2,3\,.
\]
Finally to impose that $J$ is semi-K\"ahler, by \eqref{eqn:Lee}, we must have
\[
0=\sum_i[E_{\alpha_i},J\bar E_{\alpha_i}]+[T,J\bar T]=2\sum_i \alpha_i(T)\bar b_i E_{\alpha_i}- \frac{2}{\sqrt{6}} \bar d_{13} E_{\alpha_2}\,,
\]
which holds if and only if
\[
b_1=0, \qquad b_3=0, \qquad \alpha_2(T)\bar b_2-\frac{1}{\sqrt{6}}\bar d_{13}=0\,.
\]
Summing up, the almost complex structures $I,J,IJ$ form a basis of an almost hypercomplex structure $\sf{H}$ that, together with $\langle\cdot,\cdot\rangle$, is semi-integrable if and only if
\[
JT=b_2\bar E_{\alpha_2}, \qquad JE_{\alpha_1}=-d_{13}\bar E_{\alpha_3}, \qquad JE_{\alpha_2}=-b_2 \bar T, \qquad JE_{\alpha_3}=d_{13}\bar E_{\alpha_1},
\]
\[
|b_2|^2=1, \qquad |d_{13}|^2=1, \qquad \alpha_2(T)\bar b_2-\frac{1}{\sqrt{6}}\bar d_{13}=0\,.
\]
Note that these equations can be solved if and only if
\begin{equation}\label{eqn:A2}
|\alpha_2(T)|=\frac{1}{\sqrt{6}}\,.
\end{equation}
To show that \eqref{eqn:A2} is satisfied we write
\[
T=\left( \begin{smallmatrix}  s & 0 & 0\\
0 &  t & 0\\
0 & 0 & -s-t \end{smallmatrix} \right)
\]
with $s,t \in \C$. Since $T\in \mathfrak{t}^{1,0}$, we have
\[
0=\langle T,T\rangle=6(s^2+t^2+(s+t)^2)=12(s^2+t^2+st)
\]
so $s/t$ must be a primitive third root of unity $\epsilon$. On the other hand,
\[
1=\langle T,\bar T\rangle=6(|s|^2+|t|^2+|s+t|^2)=6(|\epsilon t|^2+|t|^2+|\epsilon t+t|^2))=18 |t|^2\,,
\]
where we also used that
\[
\epsilon+ \bar \epsilon=\epsilon+ \frac{1}{\epsilon}=\frac{\epsilon^2+1}{\epsilon}=-1\,.
\]
From this, we conclude
\[
|\alpha_2(T)|^2=|2s+t|^2=|2\epsilon t+t|^2=3|t|^2=\frac{1}{6}\,,
\]
which shows that \eqref{eqn:A2} is satisfied and thus $\g$ admits a semi-integrable hyperhermitian structure, concluding the proof.
\end{proof}


\begin{thebibliography}{[99]}
\bibitem{AV}
{\sc S.~Alesker, M.~Verbitsky}: {\em Quaternionic Monge-Ampère equation and Calabi problem for HKT-manifolds
}, Israel J. Math. {\bf 176} (2010), 109--138.

\bibitem{BPT}
{\sc G.~Barbaro, F.~Pediconi, N.~Tardini}: {\em Pluriclosed manifolds with parallel Bismut
torsion} , to appear in Crelle’s Journal.

\bibitem{BDV}
{\sc M.~L.~Barberis, I.~Dotti, M.~Verbitsky}: {\em Canonical bundles of complex nilmanifolds, with applications to hypercomplex geometry }, Math. Res. Lett. {\bf 16} no. 2 (2009), 331--347.

\bibitem{BF}
{\sc M.~L.~Barberis, A.~Fino}: {\em New HKT manifolds arising from quaternionic representations}, Math. Z. {\bf 267}
 no. 3-4 (2011), 717--735.

\bibitem{Bismut}
{\sc J.~M.~Bismut}: {\em A local index theorem for non-K\"ahler manifolds}, Math. Ann. {\bf 284} (1989), 681--699.

\bibitem{Bour}
{\sc N.~Bourbaki}: \emph{Lie Groups and Lie Algebras. Chapters 4--6}, Springer 2002.

\bibitem{Boyer}
{\sc C.~P.~Boyer}: {\em A note on hyper-Hermitian four-manifolds}, Proc. Amer. Math. Soc. {\bf 102} no. 1 (1988), 157--164.

\bibitem{BFG24}
{\sc B.~Brienza, A.~Fino, G.~Grantcharov}: {\em CYT and SKT manifolds with parallel Bismut torsion}, Proc. R.
Soc. Edinb. A: Math. (2024), 1--26.

\bibitem{BFG}
{\sc B.~Brienza, A.~Fino, G.~Grantcharov}: {\em A mapping tori construction of strong HKT and generalized hyper-K\"ahler manifolds}, in \lq\lq Real and Complex Geometry - in Honour of Paul
Gauduchon\rq\rq, Springer, 2025.

\bibitem{BFGV}
{\sc B.~Brienza, A.~Fino, G.~Grantcharov, M. Verbitsky}: {\em On the structure of compact strong HKT manifolds}, Commun. Math. Phys. {\bf 407} article n. 122 (2026). 

\bibitem{CS}
{\sc M.~Cabrera, A.~Swann}: {\em Almost Hermitian structures and quaternionic geometries}, Differential Geom. Appl. {\bf 21} no. 2 (2004), 199--214.

\bibitem{FGM}
{\sc A.~Fino, G.~Grantcharov, A.~Mainenti}: {\em $p$-Kähler structures on fibrations and reductive Lie groups}, \url{https://arxiv.org/abs/2601.21849}.	

\bibitem{FG}
{\sc E.~Fusi, G.~Gentili}: {\em Special metrics in hypercomplex geometry}, Adv. Math. {\bf 496} (2026), Paper No. 111001.

\bibitem{GT}
{\sc P.~Gauduchon, K.~P.~Tod}: {\em Hyper-Hermitian metrics with symmetry}, J. Geom. Phys. {\bf 25} no. 3-4 (1998), 291--304.

\bibitem{GH} 
{\sc A.~Gray, L.~M.~Hervella}: \emph{The Sixteen Classes of Almost Hermitian Manifolds and Their Linear Invariants}, Ann. Mat. Pura Appl. {\bf 123} (1980), 35--58.

\bibitem{HP}
{\sc P.~S.~Howe, G.~Papadopoulos}: {\em Twistor spaces for HKT manifolds}, Phys. Lett. B {\bf 379} (1996), 80--86.

\bibitem{IP}
{\sc S.~Ivanov, A.~Petkov}: {\em HKT manifolds with holonomy $\mathrm{SL}(n,\H)$}, Int. Math. Res. Not. IMRN 2012 no. 16, 3779--3799.

\bibitem{Joyce}
{\sc D.~Joyce}: {\em Compact hypercomplex and quaternionic manifolds}, J. Differential Geom. \textbf{35} (1992), 743--761.

\bibitem{MCS}
{\sc F.~Martín~Cabrera, A.~Swann}: {\em Almost Hermitian structures and quaternionic geometries}, Differential Geom. Appl. {\bf 21} no. 2 (2004), 199--214.

\bibitem{MV}
{\sc R.~Moraru, M.~Verbitsky}: {\em Stable bundles on hypercomplex surfaces}, Cent. Eur. J. Math. {\bf 8} no. 2  (2010),
327--337.

\bibitem{M} 
{\sc A.~Moroianu}: \emph{Lectures on Kähler Geometry}, 
London Mathematical Society Student Texts {\bf 69}, Cambridge University Press, Cambridge, 2007. 

\bibitem{MS} 
{\sc A.~Moroianu, P.~Schwahn}: \emph{Geometries with parallel, skew-symmetric and closed torsion}, \url{http://arxiv.org/abs/2605.13227}

\bibitem{Obata}
{\sc M.~Obata}: {\em Affine connections on manifolds with almost complex, quaternion or Hermitian structure}, Japan. J. Math. {\bf 26} (1956), 43--77.

\bibitem{OPS}
{\sc L.~Ornea, Y.~S.~Poon, A.~Swann}: {\em Potential 1-forms for hyper-K\"ahler structures with torsion } Classical Quantum Gravity {\bf 20} no. 9  (2003), 1845--1856.

\bibitem{Pittie}
{\sc H.~V.~Pittie}: {\em The Dolbeault-cohomology ring of a compact, even-dimensional lie group}, Proc. Indian Acad. Sci. {\bf 98} no. 2-3 (1988), 117--152.

\bibitem{Samelson}
{\sc H.~Samelson}: {\em A class of complex analytic manifolds}, Portugaliae Math. {\bf 12} (1953), 129--132.

\bibitem{Sommese}
{\sc A.~Sommese}: {\em Quaternionic Manifolds}, Math. Ann. {\bf 212} (1975), 191--214.

\bibitem{SSTvP}
{\sc P.~Spindel, A.~Sevrin, W.~Troost and A.~Van Proeyen}: {\em Extended supersymmetric sigma models on group manifolds. I. The complex structures},
Nuclear Phys. B \textbf{308} (1988), 662--698.

\bibitem{Wang}
{\sc H.-C.~Wang}: {\em Closed manifolds with homogeneous complex structure}, Amer. J. Math. {\bf 76} (1954), 1--32.

\bibitem{W}
{\sc E.~Witten}: {\em Instantons and the large $\mathcal{N}=4$ algebra }, J. Phys. A {\bf 58} no. 3 (2025), Paper No. 035403, 68 pp.

\bibitem{Yano-Ako}
{\sc K.~Yano, M.~Ako}: {\em Integrability conditions for almost quaternion structures},  Hokkaido Math. J. {\bf 1} (1972), 63--86.

\end{thebibliography}
\end{document}